\newtheorem{theorem}{Theorem}
\newcommand{\Fig}{Fig.}
\newcommand{\Figs}{Figs.}
\begin{document}
%
%
\title{Directed Self-Assembly of Linear Nanostructures by Optimal Control of External Electrical Fields}
%
%
\author{Arash Komaee and Paul I. Barton
\thanks{This work was supported by the National Science Foundation under Grant No. CBET-1033533.}
\thanks{A. Komaee was with the Department of Chemical Engineering, Massachusetts Institute of Technology, Cambridge, MA 02139, USA. He is now with the Department of Electrical and Computer Engineering, Southern Illinois University, Carbondale, IL 62901, USA (e-mail: akomaee@siu.edu).}
\thanks{P.~I.~Barton is with the Department of Chemical Engineering, Massachusetts Institute of Technology, Cambridge, MA 02139, USA (e-mail: pib@mit.edu).}
}
\maketitle
%
%
\begin{abstract}
An optimal control strategy is developed to construct nanostructures of desired geometry along line segments by means of directed self-assembly of charged particles. Such a control strategy determines the electric potentials of a set of electrodes located at fixed points in the line segment. The particles move under the electric forces generated by these electrodes and by the interactions between the particles themselves to form a desired pattern eventually. Due to technology limitations, the particle positions cannot be measured during the course of control, so that the control is open-loop in nature. Such an open-loop control optimally changes the electrode potentials in time in order to create a desired pattern with the highest probability, despite the inherent uncertainty in the initial positions and the dynamical behaviors of the particles. Two models are proposed to describe the uncertain dynamics of the particles: a continuous model relying on a set of nonlinear stochastic differential equations, and a discrete Ising model consisting of a large dimensional continuous-time Markov chain. While the first model is more mathematically tractable, the second one more precisely describes particles at the nanometer scale. The control design procedure begins with the continuous model and identifies the structure of its stable equilibria, which is used later to propose a piecewise constant structure for the control and to demonstrate that the optimal value of each piece is independently obtained from a certain static optimization problem. It is shown next that the design procedure can be applied to the discrete model with only minor modifications. A numerical example of control design is presented.
\end{abstract}
%
%
\begin{IEEEkeywords}
Directed self-assembly, Fokker-Planck equation, Ising model, nanostructure, optimal control.
\end{IEEEkeywords}
%
%
\section{Introduction}
Self-assembly is the process of forming an ordered structure from initially disordered components that only interact locally, without external direction. At the molecular level, this process is a common technique for fabrication of nanostructures with periodic patterns~\cite{ART.WhitesidesGrzybowski.02,ART.WhitesidesBoncheva.02,ART.AmirParvizEtAl.03,ART.Zhang.03,ART.KimEtAl.03,ART.ParkEtAl.03,
ART.ChengEtAl.04,ART.LoveEtAl.05,ART.KhaledEtAl.05,ART.OzinEtAl.09}. Due to the important role of this fabrication technique in nanotechnology, several researchers have studied self-assembly phenomena at a theoretical level based on abstract models~\cite{TECH.Adleman.00,PROC.AdlemanEtAl.01,COLL.CarboneSeeman.04,ART.SoloveichikWinfree.07,
ART.MajumderEtAl.08,ART.HormozBrenner.11,ART.Chandran.12,ART.Patitz.13}.

Self-assembled nanostructures usually demonstrate periodic patterns that only depend on the nature of their components and the environmental conditions under which the patterns are formed. However, several applications require fabrication of nanostructures with certain non-periodic geometries~\cite{ART.RosiMirkin.05,ART.StephanopoulosEtAl.05,ART.WinklemanEtAl.05,ART.StoykovichEtAl.07,PROC.Kiehl.07,ART.LalanderEtAl.10}. Given the major role of molecular self-assembly in fabrication of periodic nanostructures, it is reasonable to ask if this process can be externally directed to fabricate nanostructures of desired geometry which are not necessarily periodic. Such a \textit{directed self-assembly} process is the focus of this paper.

In directed self-assembly, a number of charged nanoparticles (e.g., DNA tiles) are manipulated by external electrical fields to form a nanostructure of desired geometry. The directing electrical fields are generated and controlled by relatively small number of electrodes (compared to the number of particles) located at fixed locations on the substrate containing the particles. The dynamics of the particles are primarily governed by the interactions between them (self-assembly), and is modified to some extent by manipulation of the electrical potentials of these electrodes (external direction). The particles are initially distributed randomly on the substrate and are perturbed by random disturbances during the assembly process. Since the particle positions cannot be measured during the course of control, a feedback loop cannot be established and the electrodes are actuated only by open-loop controls.

The control objective is to direct the particles towards formation of a desired pattern despite the uncertainty in their dynamics and initial positions. Under an optimal design, this control must maximize the probability of forming the desired pattern by the end of the assembly process, and maintain the formed structure under a static control afterward. Such a constant static control creates a stable equilibrium representing the desired pattern. In addition to this intended stable equilibrium, the static control inherently creates multiple undesired stable equilibria, and a major challenge of an optimal control is to prevent the system falling into such \textit{kinetic traps}. Given the large number of these kinetic traps and the inherent uncertainty in the initial distribution and dynamics of the particles, the system will most likely be trapped by an undesired stable equilibrium (formation of a wrong pattern), unless a phase of dynamic control (time-dependent) is applied prior to the static control.

This paper intends to develop an analytic framework for study of directed self-assembly, including a systematic method for control design. Rather than focusing on a detailed model of the physical process, the main emphasis is on providing a clear  understanding of the fundamental concepts such as static control, kinetic traps, and dynamic control. To be consistent with this approach, the models and control problem considered in this paper are abstractions of real-world directed self-assembly: they capture the essence of this phenomenon but do not reflect all its details. In particular, the paper focuses on directed self-assembly of linear structures (one-dimensional patterns along straight lines), a simplified model also adopted in \cite{PROC.AdlemanEtAl.01,ART.Chandran.12} to study ``undirected'' self-assembly. This special case of the more general planar patterns demonstrates certain properties that facilitate exact characterization of the stable equilibria of the system, which in turn, allows for a rigorous analysis and control design methodology. The concepts and methods developed here for linear patterns are equally valid in two dimensions, while generalization of some computational procedures might not be immediate. Such a generalization, at least approximately, is the subject of our future work.

This paper adopts two different but closely related models to describe directed self-assembly. A continuous model is presented in Section~\ref{LinearGeometry.ModelSection} which allows the particles to position continuously at any arbitrary point in a line segment. This model is precise for larger particles of micrometer diameter and its continuous nature facilitates our analysis in Section~\ref{LinearGeometry.ControlDesignSection}. Later in Section~\ref{LinearGeometry.DiscreteModelSection}, a discrete model is presented for nano-scale particles, and it is shown how the results of Section~\ref{LinearGeometry.ControlDesignSection}, originally developed for the continuous model, can be tailored to this discrete model only with minor modifications. Our main results on the structure of the kinetic traps, control design, and optimization of the electrodes are presented in Section~\ref{LinearGeometry.ControlDesignSection}.
%
%
\section{Model and Problem Statement}\label{LinearGeometry.ModelSection}
The system of particles considered in this paper is described at the nano-scale ($\sim10\mbox{nm}$) by a discrete Ising model and a master equation~\cite{ART.SolisEtAl.10A,ART.SolisEtAl.10B,ART.LakerveldEtAl.12}. In this model, the particles can occupy only a finite set of positions along a line segment, in contrast to a continuous model used for larger particles ($\sim1\mu\mbox{m}$) in which the particles can position continuously at any arbitrary point along the line segment. The latter model is directly derived from the classical Newton's second law of motion, and Coulomb's law that governs the interactions between the particles and the forces applied to the particles by the electrodes. Such a continuous model is more intuitive and mathematically more tractable, thus it is a convenient point of departure to explain the concepts and control design methodology developed in this paper. We begin with this continuous model and construct our control design method on this basis. Later in Section~\ref{LinearGeometry.DiscreteModelSection}, we present the discrete model and show that for the purpose of control design using our proposed method, the two models are mathematically equivalent and can be interchanged with minor modifications. In particular, our control design relies on the steady-state behaviors of these models which match closely despite their different dynamical behaviors. It is emphasized that the two models describe different physical phenomena and they are not necessarily interchangeable for other purposes such as simulations.

Throughout this paper, the time-dependent state and control vectors are shown by the boldface letters $\mathbf{x}$ and $\mathbf{u}$, so that $\mathbf{x}$ and $\mathbf{u}$ are mappings from time into the state space and the control set, respectively. The values of the state and the control vectors at time $t$ are denoted by $\mathbf{x}\left(t\right)$ and $\mathbf{u}\left(t\right)$ or simply by~$x$ and $u$ as a shorthand. All constant vectors and other functions of time or other variables are shown in plain letters.

Referring to \Fig~\ref{LinearGeometry.ControlsParticles}, consider a line segment and assume that $c+1$ electrodes are located at the fixed points
\begin{equation*}
0=q_0<q_1<\cdots<q_c
\end{equation*}
in this line segment. Suppose that ${n}$ identical charged particles are located between $q_0$ and $q_c$ at the points $x_1,x_2,\ldots,x_{n}$. The particle positions specify the state vector $x=\left(x_1,x_2,\dots,x_{n}\right)$ in $\mathbb{R}^n$. The control vector $u=\left(u_0,u_1,u_2,\dots,u_c\right)$ is defined in such a manner that its $k$th component $u_k$ represents the electric charge of the electrode $k=0,1,,\ldots,c$ normalized to the charge of a single particle. It is assumed that at any time, the value of the control vector $u$ can be arbitrarily chosen within the control set $\mathcal{U}\subset\mathbb{R}^{c+1}$.
\begin{figure}[h]
\begin{center}
\includegraphics[scale=1]{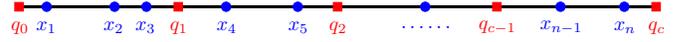}\\
\caption{Geometry of the charged particles and the control electrodes along a line segment. The disks represent the particles while the boxes stand for the electrodes.}\label{LinearGeometry.ControlsParticles}
\end{center}
\end{figure}

The total energy associated with the state $x$ of the particles and the control value $u$ is given by ${\kappa}V\left(x,u\right)$, where the normalized energy function $V:\mathbb{R}^n\times\mathbb{R}^{c+1}\to\mathbb{R}$ is given~by
\begin{equation}\label{LinearGeometry.EnergyFunction}
V\left(x,u\right)=\frac{1}{2}\sum_{i=1}^{{n}}\sum_{\substack{j=1\\j\neq{i}}}^{{n}}
\frac{1}{\left|x_i-x_j\right|}+\sum_{i=1}^{{n}}\sum_{j=0}^{c}\frac{u_j}{\left|x_i-q_j\right|}\,.
\end{equation}
Here, $\kappa>0$ is a constant defined as
\begin{equation*}
\kappa=\frac{e^2}{4\pi\varepsilon_0\varepsilon}\,,
\end{equation*}
where $e$ denotes the charge of a single particle, $\varepsilon_0$ stands for the permittivity of free space, and the dimensionless constant~$\varepsilon$ is the relative permittivity of the environment containing the particles. The negative gradient $-{\kappa}\nabla_xV\left(x,u\right)$ of the total energy ($\nabla_x$ denotes the gradient operator with respect to the first argument) is a vector in $\mathbb{R}^n$ whose $k$th component is the total force applied to the $k$th particle by the remaining $n-1$ particles (first term on the right-hand side of~\eqref{LinearGeometry.EnergyFunction}) and by the $c+1$ electrodes (second term on the right-hand side of~\eqref{LinearGeometry.EnergyFunction}).

Assume that the particles start from the initial state $x_0$ at time $t=0$ and their state $\mathbf{x}\left(t\right)\in\mathbb{R}^n$ evolves in time under a time-varying control $\mathbf{u}\left(t\right)\in\mathcal{U}$. The dynamics of the particles is determined by three factors: the Coulomb forces caused by the interactions between the particles and the electrodes and the interactions between the particles themselves, the friction between the particles and their surrounding fluid (drag), and the Brownian motion. In the absence of the Brownian motion, the particles accelerate under the Coulomb forces and the opposing resistance of the surrounding fluid. By Stokes' drag law~\cite{BOOK.Probstein.94}, such resistive forces are negatively proportional to the velocity of the particles with a proportionality constant $\mu>0$. In response to a sudden change in the control Coulomb forces, the particles accelerate for a short period of time before the opposing drag forces balance this change in the control forces. In a large friction regime, this acceleration period is short and negligible~\cite{ART.Felderhof.87}, so that it is a reasonable approximation to take the drag and the Coulomb forces as equal. Then the velocity of each particle will be proportional to its applied Coulomb force (Smoluchowski approximation). By normalizing time to $\mu/\kappa$, the proportionality constant is unit and the equation of motion of the particles can be simply written as
\begin{equation}\label{LinearGeometry.DeterministicEquationMotion}
\dot{\mathbf{x}}\left(t\right)=-\nabla_xV\left(\mathbf{x}\left(t\right),\mathbf{u}\left(t\right)\right).
\end{equation}

The contribution of the Brownian motion is incorporated into the equation of motion using a ${n}$-dimensional standard Wiener process $\left\{\mathbf{w}\left(t\right)\right\}$ as described by the It\^{o} stochastic differential equation~\cite{BOOK.Oksendal.03}
\begin{equation}\label{LinearGeometry.StochasticEquationMotion}
d\mathbf{x}\left(t\right)=-\nabla_xV\left(\mathbf{x}\left(t\right),\mathbf{u}\left(t\right)\right)dt+\sigma{d\mathbf{w}\left(t\right)}.
\end{equation}
Here, $\sigma=\sqrt{2{\kappa}k_BT}$ is a constant depending on the Boltzmann constant $k_B$, the temperature $T$ in Kelvin, and the normalizing factor $\kappa$ of the energy function. It is assumed that the initial state $\mathbf{x}\left(0\right)=x_0$ is a random vector with the known probability density function $p_0\left(x\right)$ satisfying
\begin{equation*}
p_0\left(x\right)=0,{\quad}x\notin\left[q_0,q_c\right]^n.
\end{equation*}
The stochastic differential equation~\eqref{LinearGeometry.StochasticEquationMotion} represents the Langevin equation for the particle positions~\cite{ART.UhlenbeckOrnstein.30,ART.Roux.92}.

Suppose that the interval $\left[q_0,q_c\right]$ is partitioned into $N$ subintervals $\mathscr{I}_1,\mathscr{I}_2,\ldots,\mathscr{I}_N$ of the equal length $d_0$. It is assumed that the number $N$ of these subintervals is larger than the number ${n}$ of the particles. Further, assume that the distance $q_k-q_{k-1}$ between the electrodes is an integer multiple of the grid size $d_0$.

A pattern $\mathcal{P}\in\left\{0,1\right\}^N$ is defined as a binary vector of dimension $N$ with exactly ${n}$ ones ($1$'s) and $N-{n}$ zeros ($0$'s). The total number of patterns is given by the combination $S=\left(\substack{N\\{n}}\right)$. Each binary component of a pattern represents one of the subintervals $\mathscr{I}_k$. It is said that a pattern $\mathcal{P}$ is formed by the particles, if exactly one particle is inside the subintervals associated with the components of value $1$ in $\mathcal{P}$. Notice that every state of the particles does not necessarily define a pattern since it is possible that more than one particle belong to a certain subinterval. \Fig~\ref{LinearGeometry.FigDynamicControlExample} illustrates a nanostructure created by $n=8$ particles in a grid of $N=16$ cells with $c+1=5$ electrodes. The binary vector $\mathcal{P}=\left(0,1,1,1,0,0,1,1,0,0,1,0,0,1,0,1\right)$ represents this nanostructure (pattern) in such a manner that each occupied cell corresponds to a $1$ in this vector.
\begin{figure}[h]
\begin{center}
\includegraphics[scale=1]{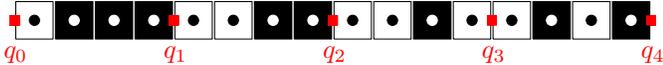}\\
\caption{Nano-structure created by $n=8$ particles in a grid of $N=16$ cells with $c+1=5$ electrodes. The pattern is represented by the binary vector $\mathcal{P}=\left(0,1,1,1,0,0,1,1,0,0,1,0,0,1,0,1\right)$, where each filled cell is corresponding to a $1$ in this vector. The boxes mark the locations of the electrodes and the disks show the locations that the particles can occupy in the discrete model of Section~\ref{LinearGeometry.DiscreteModelSection}.}\label{LinearGeometry.FigDynamicControlExample}
\end{center}
\end{figure}

Each pattern $\mathcal{P}$ is uniquely mapped into a subset $\mathscr{P}_0\left(\mathcal{P}\right)$ of the state space $\left[q_0,q_c\right]^{n}$ such that the formation of that pattern at time $t$ occurs if $\mathbf{x}\left(t\right)\in\mathscr{P}_0\left(\mathcal{P}\right)$. Let $\iota_k$, $k=1,2,\ldots,n$ denote the indices of the $k$th $1$ in the binary vector (pattern)~$\mathcal{P}$. Then the value of the mapping $\mathscr{P}_0\left(\mathcal{P}\right)$ is defined as the union
\begin{equation*}
\mathscr{P}_0\left(\mathcal{P}\right)=\bigcup\mathscr{I}_{i_1}\times\mathscr{I}_{i_2}\times\cdots\times\mathscr{I}_{i_{n}}
\end{equation*}
taken over the set of all ${n}!$ permutations of $\left(\iota_1,\iota_2,\ldots,\iota_{n}\right)$.

The control goal is to move the particles in such a manner that they form a desired pattern $\mathcal{P}_d$ with the highest probability at a final time $t_f$. This must be achieved despite the inherent uncertainty in the system dynamics and the initial state and by means of an open-loop control since the particle positions (components of the state vector) cannot be measured during the course of control due to technology limitations. Therefore, the objective is to obtain an open-loop control $\mathbf{u}\left(t\right)\in\mathcal{U}$ on $t\in\left[0,t_f\right]$ to form a desired pattern $\mathcal{P}_d$ at the final time $t_f$ with the highest possible probability, and to maintain this maximum probability under the constant control $u_\mathrm{ss}\triangleq\mathbf{u}\left(t_f\right)$ afterward ($t>t_f$). These requirements are mathematically expressed by maximizing the payoff function
\begin{equation}\label{LinearGeometry.MaximumProbabilityCondition}
J=\Pr\left\{\mathbf{x}\left(t_f\right)\in\mathscr{P}_0\left(\mathcal{P}_d\right)\right\}
\end{equation}
under the inequality constraint
\begin{equation}\label{LinearGeometry.SteadyStateProbabilityCondition}
\left|\Pr\left\{\mathbf{x}\left(t\right)\in\mathscr{P}_0\left(\mathcal{P}_d\right)\right\}-\lim_{t^\prime\to+\infty}
\Pr\left\{\mathbf{x}\left(t^\prime\right)\in\mathscr{P}_0\left(\mathcal{P}_d\right)\right\}\right|\leqslant\epsilon
\end{equation}
for all $t\geqslant{t_f}$ and some small $0<\epsilon<1$. The final time $t_f>0$ is a free parameter that is preferred but not constrained to be reasonably short.

This optimization problem can be formulated as an optimal control problem with deterministic but infinite-dimensional dynamics. It is well known that the probability density function $p\left(x,t\right)$ of $\mathbf{x}\left(t\right)$ solves the Fokker-Planck equation~\cite{BOOK.Soize.94}
\begin{equation*}
\frac{{\partial}p}{\partial{t}}\left(x,t\right)=\nabla_x\cdot\left(\nabla_xV\left(x,\mathbf{u}\left(t\right)\right)p\left(x,t\right)
+\frac{1}{2}\;\!\sigma^2\nabla_xp\left(x,t\right)\right)
\end{equation*}
with the initial condition $p\left(x,0\right)=p_0\left(x\right)$, where $\nabla_x\cdot$ denotes the divergence operator with respect to the first argument. Then, subject to this infinite-dimensional dynamics, an admissible control $\mathbf{u}\left(t\right)\in\mathcal{U}$ is sought on $t\in\left[0,t_f\right]$ to maximize the payoff function
\begin{equation*}
J=\int_{\mathscr{P}_0\left(\mathcal{P}_d\right)}p\left(x,t_f\right)dx
\end{equation*}
while maintaining the terminal condition
\begin{equation*}
\left|\int_{\mathscr{P}_0\left(\mathcal{P}_d\right)}p\left(x,t_f\right)dx-\lim_{t\to+\infty}
\int_{\mathscr{P}_0\left(\mathcal{P}_d\right)}p\left(x,t\right)dx\right|\leqslant\epsilon
\end{equation*}
under the constant control $\mathbf{u}\left(t_f\right)$ for $t>t_f$.

This new formulation represents a standard optimal control problem\footnote{The payoff function consists of only a terminal payoff and does not include an integral over time of the state.} although its solution is complicated by the infinite dimensions of the Fokker-Planck equation. In Section~\ref{LinearGeometry.ControlDesignSection}, we obtain a solution to this optimal control problem within a certain class of piecewise constant controls. This solution exploits a certain structure of the nonlinear system~\eqref{LinearGeometry.StochasticEquationMotion} to convert the optimal control problem above to a sequence of static optimization problems with tractable computational complexity.
%
%
\section{Control Design}\label{LinearGeometry.ControlDesignSection}
Our control design procedure consists of two steps: design of a static control, and design of a dynamic control. The static control $u_\mathrm{ss}\in\mathcal{U}$ is a constant control intended to create a stable equilibrium $x_\mathrm{ss}\in\mathscr{P}_0\left(\mathcal{P}_d\right)$ inside the subset $\mathscr{P}_0\left(\mathcal{P}_d\right)$ of the state space that represents a desired pattern $\mathcal{P}_d$. In the absence of the Brownian motion, a stable equilibrium is a point of the state space with a sustainable balance of forces under which the particles are at rest, i.e., the state vector $\mathbf{x}\left(t\right)$ settles at this point in the steady-state so that $\dot{\mathbf{x}}\left(t\right)=\mathbf{0}$. In the presence of the Brownian motion, the state vector moves towards the stable equilibrium and eventually reaches a stationary regime under which it randomly jitters in the vicinity of $x_\mathrm{ss}$. The desired pattern is formed in this regime as $x_\mathrm{ss}$ is inside $\mathscr{P}_0\left(\mathcal{P}_d\right)$ and the state vector remains close to this point. The optimal design of the static control $u_\mathrm{ss}$ is discussed in Section~\ref{LinearGeometry.StaticControlSection}.

The static control $u_\mathrm{ss}$ and the corresponding equilibrium $x_\mathrm{ss}$ must jointly satisfy the conditions\footnote{The notation $H\succ0$ indicates that $H$ is a positive definite matrix.}
\begin{subequations}
\begin{align}
-&\nabla_xV\left(x_\mathrm{ss},u_\mathrm{ss}\right)=\mathbf{0}\label{LinearGeometry.EquilibriumEquation}\\
&H\left(x_\mathrm{ss},u_\mathrm{ss}\right)\succ0\label{LinearGeometry.PositiveDefiniteHessian}\\
&x_\mathrm{ss}\in\mathscr{P}_0\left(\mathcal{P}_d\right)\\
&u_\mathrm{ss}\in\mathcal{U},
\end{align}
\end{subequations}
where $H\left(x_\mathrm{ss},u_\mathrm{ss}\right)\in\mathbb{R}^{n\times{n}}$ is the Hessian matrix
\begin{equation*}
H\left(x_\mathrm{ss},u_\mathrm{ss}\right)=\frac{\partial^2V}{\partial{x^\mathrm{T}}\partial{x}}\left(x_\mathrm{ss},u_\mathrm{ss}\right).
\end{equation*}
For a fixed $u_\mathrm{ss}$, the solution $x_\mathrm{ss}$ of the algebraic equation~\eqref{LinearGeometry.EquilibriumEquation} is a stationary point of the energy function $V\left(\,\cdot\,,u_\mathrm{ss}\right)$, and if this stationary point is a strict local minimum of $V\left(\,\cdot\,,u_\mathrm{ss}\right)$, it is a stable equilibrium of the deterministic dynamical system~\eqref{LinearGeometry.DeterministicEquationMotion}. As noted in~\eqref{LinearGeometry.PositiveDefiniteHessian}, if the Hessian matrix $H\left(x_\mathrm{ss},u_\mathrm{ss}\right)$ is positive definite, the stationary point is a strict local minimum.

For any given static control $u_\mathrm{ss}$, the algebraic equation~\eqref{LinearGeometry.EquilibriumEquation} can have multiple stable solutions for $x_\mathrm{ss}$, not necessarily inside $\mathscr{P}_0\left(\mathcal{P}_d\right)$ to form a desired pattern. This is caused by the fact that the energy function $V\left(\,\cdot\,,u_\mathrm{ss}\right)$ can have multiple strict local minima (see \Fig~\ref{LinearGeometry.MultipleEnergyMinima}(a)) that allow for the formation of multiple stable patterns. As shown in \Fig~\ref{LinearGeometry.MultipleEnergyMinima}(a), the energy function consists of several \textit{potential wells} with a single stable equilibrium (a strict local minimum) at the bottom of each one. Each potential well specifies a \textit{region of attraction} (ROA)---an open subset of the state space containing exactly one stable equilibrium and marked by the property that if the state $\mathbf{x}\left(t\right)$ of the dynamical system~\eqref{LinearGeometry.DeterministicEquationMotion} is initially inside a certain ROA, it remains inside that ROA and moves towards its equilibrium. This property is demonstrated by the inequality
\begin{align*}
\frac{d}{dt}V\left(\mathbf{x}\left(t\right),u_\mathrm{ss}\right)&
=\dot{\mathbf{x}}^\mathrm{T}\left(t\right)\nabla_xV\left(\mathbf{x}\left(t\right),u_\mathrm{ss}\right)\nonumber\\
&=-\left\|\nabla_xV\left(\mathbf{x}\left(t\right),u_\mathrm{ss}\right)\right\|^2\\
&\leqslant0,
\end{align*}
where the equality holds if and only if $\mathbf{x}\left(t\right)$ is an equilibrium. This implies that the total energy inside a single ROA monotonically decreases before the system settles at the stable equilibrium at the bottom of the potential well. Since the energy level inside a ROA can never exceed its initial value, the state vector cannot escape its initial ROA.
\begin{figure}[h]
\begin{center}
\includegraphics[scale=1]{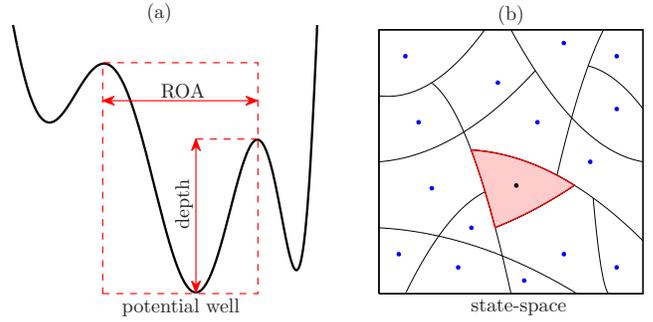}\\
\caption{Multiple equilibria, regions of attraction, and potential wells: (a) multiple
potential wells of a dynamical system of one dimension; (b) the state space is partitioned by the ROAs. The depth of a potential well is marked in (a) as the energy difference between the deepest point of that well and the lowest energy level on its boundary. In (b), each ROA contains a single stable equilibrium represented by a dot and the shaded region specifies the ROA containing the desired equilibrium. Without a dynamic control, only the initial states inside this ROA lead to a desired pattern.}\label{LinearGeometry.MultipleEnergyMinima}
\end{center}
\end{figure}

For a static control with multiple stable equilibria, the state space is partitioned\footnote{This means that the ROAs are disjoint subsets of the state space whose union is the entire state space excluding the boundaries of the ROAs.} by the set of ROAs as illustrated schematically in~\Fig~\ref{LinearGeometry.MultipleEnergyMinima}(b). However, only a certain equilibrium forms the desired pattern, and only those initial states belonging to the ROA of that equilibrium end up with the desired pattern (see~\Fig~\ref{LinearGeometry.MultipleEnergyMinima}(b)). Thus, before starting the phase of static control, it is necessary to bring the initial state inside the desired ROA. This task is performed by a dynamic control, a time-varying open-loop control that drives the state vector $\mathbf{x}\left(t\right)$ towards the desired ROA regardless of the inherent uncertainty in the initial state. Notice that the state vector---particle locations---is not known to the controller during the course of control. It is shown in Section~\ref{LinearGeometry.DynamicControlSection} that the dynamic control can be decomposed into a sequence of static controls, so that a piecewise constant structure is proposed for this control.

Before proceeding with the control design in Sections~\ref{LinearGeometry.StaticControlSection} and~\ref{LinearGeometry.DynamicControlSection}, the structure of the ROAs and their stable equilibria is studied in Section~\ref{LinearGeometry.ROAStructureSection}.
%
%
\subsection{Structure of the Regions of Attraction}\label{LinearGeometry.ROAStructureSection}
In the deterministic system \eqref{LinearGeometry.DeterministicEquationMotion} under the constant control $\mathbf{u}\left(t\right)=u_\mathrm{ss}$, the state vector remains in the same ROA that it takes its initial value. This property does not generally hold for stochastic systems perturbed by a Wiener process. For such stochastic systems, the random disturbance causes the state vector to jitter around one of the equilibria, often inside the same ROA. Occasionally, the deviations from the equilibrium are large enough to drive the state vector outside that ROA, allowing other stable equilibria to attract it. This migration from one potential well to another can be viewed as being caused by  a high level of energy absorbed from a disturbance that exceeds the depth of the departure potential well. As illustrated in \Fig~\ref{LinearGeometry.MultipleEnergyMinima}(a), the depth of a potential well is defined as the energy difference between its deepest point and the lowest energy on its boundary. In the stochastic system~\eqref{LinearGeometry.StochasticEquationMotion}, however, the state vector cannot leave its initial ROA (almost surely), even though the system is disturbed by a Wiener process. This unusual property is a consequence of the infinite depth of the potential wells in this system.

To show this property, consider an electrode charged with the same polarity as the particles (assumed positive), and a particle that is pushed towards the electrode by an external force. For example, in \Fig~\ref{LinearGeometry.ControlsParticles} suppose that the particle located at $x_1$ is pushed left towards the electrode at $q_0=0$. The external force required to maintain the particle at the distance $x_1$ from the electrode is proportional to $1/x_1^2$ which increases unboundedly as $x_1$ tends to $0$. This implies that the particle cannot reach the electrode using a bounded external force, and clearly cannot pass through it. With a similar argument, two particles cannot hit each other under bounded external forces that squeeze them together.

Assume that all electrodes have constant positive charges (same polarity as the particles), i.e., $\mathbf{u}\left(t\right)=u_\mathrm{ss}$ is a vector of positive components. Let $\nu_k$, $k=1,2,\ldots,c$ be the number of particles in the interval $\left(q_{k-1},q_k\right)$ so that $\nu_1+\nu_2+\cdots+\nu_c={n}$. Based on the above argument, the integers $\nu_k$ remain constant over time, i.e., at any time after applying the constant control $\mathbf{u}\left(t\right)=u_\mathrm{ss}$, the number of particles in the interval $\left(q_{k-1},q_k\right)$ is equal to its value just before application of this control. In addition, the order of the particles is preserved over time as the particles cannot jump over each other.

Since the particles are identical and their order does not change over the course of control, it can be assumed without loss of generality that they are labeled by $1,2,\ldots,{n}$ from left to right, as shown in \Fig~\ref{LinearGeometry.ControlsParticles}. This requires the initial distribution of the state vector to satisfy
\begin{equation*}
p_0\left(x\right)=0,{\quad}x\notin\mathscr{S}_0
\end{equation*}
where the simplified state space $\mathscr{S}_0$ is defined as
\begin{equation*}
\mathscr{S}_0=\left\{x|\,q_0<x_1<x_2<\cdots<x_{n}<q_c\right\}.
\end{equation*}
In addition, the fixed order of the particles allows the mapping $\mathscr{P}_0\left(\mathcal{P}\right)$ to be simplified to $\mathscr{P}\left(\mathcal{P}\right)$ defined as
\begin{equation*}
\mathscr{P}\left(\mathcal{P}\right)=\mathscr{I}_{\iota_1}\times\mathscr{I}_{\iota_2}\times\cdots\times\mathscr{I}_{\iota_{n}},
\end{equation*}
where $\iota_k$, $k=1,2,\ldots,n$ denote the indices of the $k$th $1$ in the pattern $\mathcal{P}$.

Let $\nu=\left(\nu_1,\nu_2,\ldots,\nu_c\right)$ be a vector in $\mathbb{N}_0^c$ whose $k$th component is the number of particles in the interval $\left(q_{k-1},q_k\right)$. Since the total number of particles is $n$, this vector must satisfy the constraint $\left\|\nu\right\|_1=n$. The total number of such vectors is the ``\textit{weak compositions of $n$ into $c$ parts}'' \cite[Thm.~5.2]{BOOK.Bona.06} and is given by the combination
\begin{equation}\label{LinearGeometry.ROANumber}
R=\left(\!\!\!\begin{array}{c}n+c-1 \\c-1 \end{array}\!\!\!\right)=\frac{\left(n+c-1\right)!}{n!\left(c-1\right)!}\,.
\end{equation}
Each instance of $\nu$ uniquely specifies a convex subset of $\mathscr{S}_0$ defined as\footnote{For $k=1$, the sum $\sum_{j=1}^{k-1}\nu_j$ is taken equal to $0$.}
\begin{align}\label{LinearGeometry.ROCSetDefinition}
&\mathscr{S}\left(\nu\right)=\bigl\{x|\,q_{k-1}<x_{i_k+1}<\cdots<x_{i_k+\nu_k}<q_k,\nonumber\\
&{\qquad\quad}~i_k=\textstyle\sum_{j=1}^{k-1}\nu_j,~k\in\left\{l=1,2,\ldots,c|\nu_l\neq0\right\}\bigr\}.
\end{align}
The convexity of this set is straightforward to show.

These subsets are disjoint and their union is equal to the state space $\mathscr{S}_0$ excluding a zero-measure set $\mathscr{B}_0$ containing the boundaries of the open sets $\mathscr{S}\left(\nu\right)$ in $\mathscr{S}_0$, i.e.,
\begin{equation*}
\bigcup_{\left\|\nu\right\|_1=n}\mathscr{S}\left(\nu\right)=\mathscr{S}_0\backslash\mathscr{B}_0.
\end{equation*}
Theorem~\ref{LinearGeometry.ExistenceUniquenessEquilibriumTheorem} in this section states that each subset $\mathscr{S}\left(\nu\right)$ of the state space $\mathscr{S}_0$ contains exactly one stable equilibrium of~\eqref{LinearGeometry.DeterministicEquationMotion} and that the energy function is convex over $\mathscr{S}\left(\nu\right)$, concluding that each $\mathscr{S}\left(\nu\right)$ is a ROA of the dynamical system~\eqref{LinearGeometry.DeterministicEquationMotion}.
\begin{theorem}\label{LinearGeometry.ExistenceUniquenessEquilibriumTheorem}
For any constant control $\mathbf{u}\left(t\right)=u_\mathrm{ss}$ with positive components, the energy function $V\left(\,\cdot\,,u_\mathrm{ss}\right)$ is strictly convex over each convex subset $\mathscr{S}\left(\nu\right)$ (with $\nu$ satisfying $\left\|\nu\right\|_1=n$), the dynamical system~\eqref{LinearGeometry.DeterministicEquationMotion} has exactly one equilibrium in~$\mathscr{S}\left(\nu\right)$, and that equilibrium is stable.
\end{theorem}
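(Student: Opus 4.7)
The plan is to establish the three claims in order: strict convexity of $V(\,\cdot\,,u_{\mathrm{ss}})$ on $\mathscr{S}(\nu)$, existence of a critical point via a boundary-blowup argument, and asymptotic stability by a Lyapunov argument.

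The key observation for strict convexity is that on $\mathscr{S}(\nu)$ both the particle ordering $x_1 < x_2 < \cdots < x_n$ and the assignment of each particle to a specific subinterval $(q_{k-1},q_k)$ are fixed. Consequently every absolute value in \eqref{LinearGeometry.EnergyFunction} can be replaced by a definite affine form, so that each summand equals $\phi$ evaluated at a positive affine function of $x$, with $\phi(t)=1/t$. Since $\phi$ is strictly convex on $(0,\infty)$, every term is convex in $x$. The pair terms alone are only convex (their $2\times 2$ Hessians are rank one and annihilate the common-translation direction $x \mapsto x + \alpha\mathbf{1}$). The decisive contribution comes from the electrode sum: because every $u_j$ is positive, the second term in \eqref{LinearGeometry.EnergyFunction} is separable across particles as $\sum_i f_i(x_i)$, with each $f_i$ a finite sum of strictly convex univariate functions. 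Its Hessian is a diagonal matrix with strictly positive entries, and adding it to the positive semidefinite Hessian of the pair terms yields a positive-definite Hessian for $V(\,\cdot\,,u_{\mathrm{ss}})$ throughout $\mathscr{S}(\nu)$.

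For existence of a critical point I would exploit the fact that $V(\,\cdot\,,u_{\mathrm{ss}})$ acts as a barrier on $\mathscr{S}(\nu)$. Every point of $\partial \mathscr{S}(\nu)$ corresponds either to two adjacent particles coalescing or to a particle reaching one of the bounding electrodes $q_{k-1}$ or $q_k$ of its subinterval; in either case one distance in $V$ shrinks to zero while all coefficients in \eqref{LinearGeometry.EnergyFunction} are positive, hence $V \to +\infty$. Picking any reference point $\tilde{x} \in \mathscr{S}(\nu)$, the sublevel set $\{x \in \mathscr{S}(\nu): V(x,u_{\mathrm{ss}}) \leqslant V(\tilde{x},u_{\mathrm{ss}})\}$ is closed, bounded (as $\mathscr{S}(\nu) \subset [q_0,q_c]^n$), and by the blow-up bounded away from $\partial \mathscr{S}(\nu)$, so it is compact. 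Continuity of $V$ then delivers a minimizer, necessarily interior, at which $\nabla_x V = 0$. Strict convexity rules out any second critical point, establishing both existence and uniqueness of an equilibrium $x_{\mathrm{ss}} \in \mathscr{S}(\nu)$.

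Stability of $x_{\mathrm{ss}}$ follows from the same Hessian information. Since $H(x_{\mathrm{ss}},u_{\mathrm{ss}}) \succ 0$ the equilibrium is a strict local minimum of $V(\,\cdot\,,u_{\mathrm{ss}})$, and the shifted energy $L(x) = V(x,u_{\mathrm{ss}}) - V(x_{\mathrm{ss}},u_{\mathrm{ss}})$ is a Lyapunov function on $\mathscr{S}(\nu)$: it is positive away from $x_{\mathrm{ss}}$ by strict convexity, and the derivative identity computed in the excerpt gives $\dot{L} = -\|\nabla_x V(\mathbf{x}(t),u_{\mathrm{ss}})\|^2 < 0$ off the equilibrium, yielding asymptotic stability within $\mathscr{S}(\nu)$. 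The main obstacle in this program is the strict-convexity step: the pair interactions contribute only a degenerate, translation-invariant Hessian, and one must carefully use the positivity of the electrode charges together with the separable electrode contribution to remove this degeneracy. Once strict convexity is secured, the barrier and Lyapunov arguments are standard.
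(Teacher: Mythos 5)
Your proposal is correct, but it proves the theorem by a genuinely different route than the paper. The paper works with the force field $f=-\nabla_xV$ directly: it solves $f_k=0$ coordinate-wise (each $f_k$ is strictly decreasing in $x_k$ and blows up to $\pm\infty$ at the ends of the admissible interval) to define a map $g$, proves $\left\|\nabla_xg_k\right\|_1<1$ using the explicit partial derivatives in~\eqref{LinearGeometry.PartialDerivativeF}, and invokes the mean value theorem to conclude that $g$ is a contraction in the $\infty$-norm, which gives existence and uniqueness of the equilibrium as the fixed point of $g$; strict convexity and stability are then obtained by applying the Ger\v{s}gorin circle theorem to the Jacobian of $f$, whose strict diagonal dominance comes from the electrode terms. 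Your argument reaches the same conclusions by splitting the Hessian into the positive semidefinite (translation-degenerate) pair-interaction part plus the strictly positive diagonal electrode part for convexity, and by a coercivity/barrier argument (blow-up of $V$ on $\partial\mathscr{S}\left(\nu\right)$, compact sublevel sets, Weierstrass) for existence, with uniqueness from strict convexity and stability from the energy-Lyapunov identity already stated in Section~\ref{LinearGeometry.ControlDesignSection}. The underlying structural fact is the same in both proofs---the electrode charges add strictly positive diagonal curvature that removes the degeneracy of the pair interactions, which is exactly the Ger\v{s}gorin diagonal dominance---but the payoffs differ: your variational route is shorter and cleanly handles existence on the open set $\mathscr{S}\left(\nu\right)$ (where a Banach fixed-point argument strictly speaking needs an invariant complete subset, a point the paper glosses over), whereas the paper's contraction construction is deliberately constructive, since the iteration $x^{k+1}=g\bigl(x^k\bigr)$ is reused verbatim as the numerical equilibrium solver of Section~\ref{LinearGeometry.NumericalComputationEquilibriumSection}, with the contraction constant $K<1$ guaranteeing its convergence.
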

\begin{proof}
See Appendix.
\end{proof}

Formation of a pattern $\mathcal{P}$ at time $t$ is confirmed if the state vector $\mathbf{x}\left(t\right)$ is in the subset $\mathscr{P}\left(\mathcal{P}\right)$ of the state space $\mathscr{S}_0$. On the other hand, the definition of $\mathscr{P}\left(\mathcal{P}\right)$ and the structure of the ROAs imply that each $\mathscr{P}\left(\mathcal{P}\right)$ is entirely inside a single ROA. That specific ROA is characterized as follows. Assume that the pattern $\mathcal{P}$ has $\nu_k\left(\mathcal{P}\right)$ particles in the interval $\left(q_{k-1},q_k\right)$ and let $\nu\left(\mathcal{P}\right)$ denote a vector in $\mathbb{N}_0^c$ containing the integers $\nu_k\left(\mathcal{P}\right)$, $k=1,2,\ldots,c$. Then $\mathscr{P}\left(\mathcal{P}\right)$ is a subset of $\mathscr{S}\left(\nu\left(\mathcal{P}\right)\right)$ as defined in~\eqref{LinearGeometry.ROCSetDefinition}. Thus, to form a pattern $\mathcal{P}$, it is necessary to first bring the state vector inside the ROA $\mathscr{S}\left(\nu\left(\mathcal{P}\right)\right)$. For simplicity of notation in the rest of the paper, $\mathscr{S}\left(\nu\left(\mathcal{P}\right)\right)$ is abbreviated into $\mathscr{S}\left(\mathcal{P}\right)$ to represent the ROA containing the pattern $\mathcal{P}$.
%
%
\subsection{Optimal Static Control}\label{LinearGeometry.StaticControlSection}
Suppose that a dynamic control has been applied to the stochastic system~\eqref{LinearGeometry.StochasticEquationMotion} during the time interval $t\in\left[0,t_d\right)$ to bring its state inside the ROA $\mathscr{S}\left(\mathcal{P}_d\right)$ that contains the desired pattern $\mathcal{P}_d$. In Section~\ref{LinearGeometry.DynamicControlSection}, it is shown how to design such a dynamic control to maximize the probability of hitting the target set $\mathscr{S}\left(\mathcal{P}_d\right)$. At $t=t_d$, the constant static control $u_\mathrm{ss}$ is applied to the system and the system gradually reaches the steady-state as $t\to+\infty$. In the steady-state regime, the probability of forming the desired pattern remains constant, i.e., the event of $\mathbf{x}\left(t\right)\in\mathscr{P}\left(\mathcal{P}_d\right)$ has a constant probability. The objective of the static control is to maximize this constant probability assuming that at $t=t_d$ the state vector is inside the desired ROA, i.e., $\mathbf{x}\left(t_d\right)\in\mathscr{S}\left(\mathcal{P}_d\right)$. This goal is mathematically represented by the optimization~problem
\begin{equation}\label{LinearGeometry.OptimalStaticControl}
\max_{u_\mathrm{ss}\in\mathcal{U}_\mathrm{ss}}\lim_{t\to+\infty}\Pr\left\{\mathbf{x}\left(t\right)
\in\mathscr{P}\left(\mathcal{P}_d\right)|\,\mathbf{x}\left(t_d\right)\in\mathscr{S}\left(\mathcal{P}_d\right)\right\}.
\end{equation}

In practice, the system can get arbitrarily close to the steady-state within a bounded but long enough settling time $t_f-t_d$. Since the problem statement in Section~\ref{LinearGeometry.ModelSection} does not constrain the final time $t_f$, this quantity can be chosen sufficiently large to ensure that the conditional probability
\begin{equation*}
\Pr\left\{\mathbf{x}\left(t_f\right)\in\mathscr{P}\left(\mathcal{P}_d\right)|\,\mathbf{x}\left(t_d\right)
\in\mathscr{S}\left(\mathcal{P}_d\right)\right\}
\end{equation*}
is close enough to its final value in~\eqref{LinearGeometry.OptimalStaticControl}. Under this value of the final time, the static control that solves the optimization problem~\eqref{LinearGeometry.OptimalStaticControl}, nearly maximizes this conditional probability.

Here and in the rest of this paper, the static control $u_\mathrm{ss}$ is chosen from the control set $\mathcal{U}_\mathrm{ss}$ defined as a subset of $\mathbb{R}^{c+1}$ with positive control charges at two end points $q_0$ and $q_c$, and nonnegative charges for the rest of the electrodes, so that
\begin{equation}\label{LinearGeometry.SteadyStateControlSet}
\mathcal{U}_\mathrm{ss}=\left\{u|u_0>0,u_1\geqslant0,\ldots,u_{c-1}\geqslant0,u_c>0\right\}.
\end{equation}
Under this assumption, Theorem~\ref{LinearGeometry.ExistenceUniquenessEquilibriumTheorem} is applied to the ROAs of the system. Notice that in the statement of Theorem~\ref{LinearGeometry.ExistenceUniquenessEquilibriumTheorem}, all control charges are assumed positive, while~\eqref{LinearGeometry.SteadyStateControlSet} allows some electrodes to be inactive with zero charges. This provides more flexibility to the control vector without jeopardizing the use of Theorem~\ref{LinearGeometry.ExistenceUniquenessEquilibriumTheorem}: when some electrodes are inactive, still this theorem is applied, albeit to a system with smaller number of electrodes (with a control vector of smaller dimension). In~\eqref{LinearGeometry.SteadyStateControlSet}, only the electrodes at two end points $q_0$ and $q_c$ are constrained to be active to ensure that the particles cannot escape the line segment.

For $t\geqslant{t_d}$, define $\rho\left(x,t\right)$ as the conditional probability density function of $\mathbf{x}\left(t\right)$ given $\mathbf{x}\left(t_d\right)\in\mathscr{S}\left(\mathcal{P}_d\right)$. The evolution of this function for $t>{t_d}$ is governed by the Fokker-Planck equation
\begin{equation*}
\frac{{\partial}\rho}{\partial{t}}\left(x,t\right)=\nabla_x\cdot\left({\nabla_x}V\left(x,u_\mathrm{ss}\right)\rho\left(x,t\right)
+\frac{1}{2}\;\!\sigma^2{\nabla_x}\rho\left(x,t\right)\right).
\end{equation*}
The definition of $\rho$ implies that at $t=t_d$ this function is identically $0$ for every $x\notin\mathscr{S}\left(\mathcal{P}_d\right)$. Since the state vector is almost surely inside $\mathscr{S}\left(\mathcal{P}_d\right)$ at $t=t_d$ and almost surely cannot leave this ROA, it stays in $\mathscr{S}\left(\mathcal{P}_d\right)$ for every $t\geqslant{t_d}$ with probability $1$. This implies  $\rho\left(x,t\right)=0$ for every $x\notin\mathscr{S}\left(\mathcal{P}_d\right)$ and every $t\geqslant{t_d}$. Based on this analysis, the steady-state solution of this Fokker-Planck equation is given by~\cite{BOOK.Soize.94}
\begin{equation}\label{LinearGeometry.FokkerPlanckSteadyState}
\rho\left(x,+\infty\right)=\frac{\exp\left(-2\sigma^{-2}V\left(x,u_\mathrm{ss}\right)\right)}
{\displaystyle\int_{\mathscr{S}\left(\mathcal{P}_d\right)}\exp\left(-2\sigma^{-2}V\left(\xi,u_\mathrm{ss}\right)\right)d\xi}
\end{equation}
for $x\in\mathscr{S}\left(\mathcal{P}_d\right)$ and by $\rho\left(x,+\infty\right)=0$ for $x\notin\mathscr{S}\left(\mathcal{P}_d\right)$. Note that the normalizing factor in the denominator is an integral over $\mathscr{S}\left(\mathcal{P}_d\right)$ rather than the entire state space following the fact that the conditional probability distribution is identically~$0$ outside this ROA.

Using the conditional density function~\eqref{LinearGeometry.FokkerPlanckSteadyState}, the conditional probability
\begin{equation}\label{LinearGeometry.PinfDefinition}
P_\mathrm{ss}\left(u_\mathrm{ss}\right)\triangleq\lim_{t\to+\infty}\Pr\left\{\mathbf{x}\left(t\right)
\in\mathscr{P}\left(\mathcal{P}_d\right)|\,\mathbf{x}\left(t_d\right)
\in\mathscr{S}\left(\mathcal{P}_d\right)\right\}
\end{equation}
is expressed as
\begin{equation}\label{LinearGeometry.PinfExplititForm}
P_\mathrm{ss}\left(u_\mathrm{ss}\right)=\frac{\displaystyle\int_{\mathscr{P}\left(\mathcal{P}_d\right)}
\exp\left(-2\sigma^{-2}V\left(\xi,u_\mathrm{ss}\right)\right)d\xi}
{\displaystyle\int_{\mathscr{S}\left(\mathcal{P}_d\right)}\exp\left(-2\sigma^{-2}V\left(\xi,u_\mathrm{ss}\right)\right)d\xi}\,.
\end{equation}
The condition $P_\mathrm{ss}\left(u_\mathrm{ss}\right)\leqslant1$ on probabilities is reflected in this expression by the fact that $\mathscr{P}\left(\mathcal{P}_d\right)\subset\mathscr{S}\left(\mathcal{P}_d\right)$. Using the explicit form~\eqref{LinearGeometry.PinfExplititForm}, the optimal static control $u_\mathrm{ss}^*$ is obtained from the optimization problem
\begin{equation}\label{LinearGeometry.StaticControlOptimizationProblem}
u_\mathrm{ss}^*\in\arg\max_{u_\mathrm{ss}\in\mathcal{U}_\mathrm{ss}}P_\mathrm{ss}\left(u_\mathrm{ss}\right).
\end{equation}

In this optimization problem, it is computationally expensive to determine $P_\mathrm{ss}\left(u_\mathrm{ss}\right)$ from~\eqref{LinearGeometry.PinfExplititForm}, since numerical approximation of this expression requires computation of the energy function $V\left(\xi,u_\mathrm{ss}\right)$ at a large number of points. The computational complexity can be significantly reduced by \textit{saddle point approximation}~\cite{BOOK.SeriesApproximationMethods.94,BOOK.SaddlePoint.95} of the integrals in~\eqref{LinearGeometry.PinfExplititForm}. This approximation relies on the fact that the negative exponential integrands in~\eqref{LinearGeometry.PinfExplititForm} take their significant values in the ROA $\mathscr{S}\left(\mathcal{P}_d\right)$ only around the unique minimizer $x_\mathrm{ss}\left(u_\mathrm{ss}\right)$ of the energy function. Hence, without significant loss of accuracy, $V\left(\xi,u_\mathrm{ss}\right)$ can be replaced with a simpler function that approximates it well only around $x_\mathrm{ss}\left(u_\mathrm{ss}\right)$. A reasonable choice for such an approximation is the truncated Taylor series
\begin{equation*}
V\left(\xi,u_\mathrm{ss}\right)\simeq{V}\left(x_\mathrm{ss},u_\mathrm{ss}\right)
+\frac{1}{2}\left(\xi-x_\mathrm{ss}\right)^\mathrm{T}\!H\left(x_\mathrm{ss},u_\mathrm{ss}\right)\left(\xi-x_\mathrm{ss}\right)
\end{equation*}
in which the dependence of $x_\mathrm{ss}$ on $u_\mathrm{ss}$ is not explicitly shown for the sake of simplicity.

Substituting this approximate expression into~\eqref{LinearGeometry.PinfExplititForm} and multiplying both its numerator and denominator by an appropriate constant, $P_\mathrm{ss}\left(u_\mathrm{ss}\right)$ is approximated by
\begin{equation}\label{LinearGeometry.Pinfapproximation}
P_\mathrm{ss}\left(u_\mathrm{ss}\right)\simeq\tilde{P}_\mathrm{ss}\left(x_\mathrm{ss}\left(u_\mathrm{ss}\right),u_\mathrm{ss}\right)
\end{equation}
in terms of the mapping $\tilde{P}_\mathrm{ss}:\mathbb{R}^n\times\mathbb{R}^{c+1}\to\mathbb{R}$ defined as
\begin{equation}\label{LinearGeometry.PtildeInfDefinition}
\tilde{P}_\mathrm{ss}\left(x,u\right)=\frac{\displaystyle\int_{\mathscr{P}\left(\mathcal{P}_d\right)}
\Phi\left(\xi;x,\textstyle\frac{1}{2}\,\sigma^2H^{-1}\left(x,u\right)\right)d\xi}
{\displaystyle\int_{\mathscr{S}\left(\mathcal{P}_d\right)}\Phi\left(\xi;x,\textstyle\frac{1}{2}\,\sigma^2H^{-1}\left(x,u\right)\right)d\xi}\,.
\end{equation}
Here, $\Phi\left(\,\cdot\,;m,\Sigma\right)$ denotes a multivariate normal distribution with the mean vector $m$ and the covariance matrix $\Sigma$. Using the payoff function~\eqref{LinearGeometry.Pinfapproximation}-\eqref{LinearGeometry.PtildeInfDefinition}, the optimization problem~\eqref{LinearGeometry.StaticControlOptimizationProblem} can be reformulated as the constrained optimization problem
\begin{subequations}\label{LinearGeometry.GaussianApproximationOptimizationProblem}
\begin{align}
\max_{\left(x_\mathrm{ss},u_\mathrm{ss}\right)\in\mathscr{S}\left(\mathcal{P}_d\right)\times\mathcal{U}_\mathrm{ss}}&{\;\;}
\tilde{P}_\mathrm{ss}\left(x_\mathrm{ss},u_\mathrm{ss}\right)\\
\mathrm{s.t.}\qquad\quad\,\,&{\!\!\!\!\!\!\!\!\!\!\!\!}-\nabla_xV\left(x_\mathrm{ss},u_\mathrm{ss}\right)=\mathbf{0}.
\label{LinearGeometry.GaussianApproximationOptimizationProblemB}
\end{align}
\end{subequations}

It is shown next that the approximate formula~\eqref{LinearGeometry.PtildeInfDefinition} can be directly derived from a linearized model around the unique stable equilibrium of the ROA $\mathscr{S}\left(\mathcal{P}_d\right)$. Such a linear model is later used to provide an intuitive explanation for the static control design. Let $u_\mathrm{ss}\in\mathcal{U}_\mathrm{ss}$ be a fixed control and assume that $x_\mathrm{ss}$ is its associated equilibrium in $\mathscr{S}\left(\mathcal{P}_d\right)$. If the disturbance strength $\sigma$ is small compared to the norm of the Hessian matrix~$H\left(x_\mathrm{ss},u_\mathrm{ss}\right)$, the dynamical system~\eqref{LinearGeometry.StochasticEquationMotion} can be linearized around $\left(x_\mathrm{ss},u_\mathrm{ss}\right)$ to approximate the state vector as
\begin{equation}\label{LinearGeometry.DeterministicStochasticTerms}
\mathbf{x}\left(t\right)\simeq{x_\mathrm{ss}}+\mathbf{\delta{x}}\left(t\right)
\end{equation}
in which the small deviation $\mathbf{\delta{x}}\left(t\right)$ is the solution of the linear stochastic differential equation
\begin{equation}\label{LinearGeometry.LinearizedDynamicalSystem}
d\mathbf{\delta{x}}\left(t\right)=-H\left(x_\mathrm{ss},u_\mathrm{ss}\right)\mathbf{\delta{x}}\left(t\right)dt+{\sigma}d\mathbf{w}\left(t\right).
\end{equation}

The linearity of this equation indicates that $\mathbf{\delta{x}}\left(t\right)$ is a zero-mean Gaussian random vector with the steady-state covariance matrix
\begin{equation}\label{LinearGeometry.SteadyStateCovariance}
\Sigma_\mathrm{ss}=\frac{1}{2}\,\sigma^2H^{-1}\left(x_\mathrm{ss},u_\mathrm{ss}\right)
\end{equation}
that solves the algebraic Lyapunov equation~\cite{BOOK.GajicQureshi.95}
\begin{equation*}
-H\left(x_\mathrm{ss},u_\mathrm{ss}\right)\Sigma_\mathrm{ss}-\Sigma_\mathrm{ss}H\left(x_\mathrm{ss},u_\mathrm{ss}\right)+\sigma^2I=0.
\end{equation*}
It is concluded that $\mathbf{x}\left(t\right)$ is approximately a Gaussian random vector with the mean vector $x_\mathrm{ss}$ and the covariance matrix $\Sigma_\mathrm{ss}$. For this approximation, $\tilde{P}_\mathrm{ss}\left(x_\mathrm{ss},u_\mathrm{ss}\right)$ defined in~\eqref{LinearGeometry.PtildeInfDefinition} represents the conditional probability on the right-hand side of~\eqref{LinearGeometry.PinfDefinition}.

Based on the linear model~\eqref{LinearGeometry.DeterministicStochasticTerms}-\eqref{LinearGeometry.LinearizedDynamicalSystem}, an approximate design method is introduced below to provide further intuition on the optimal static control. To facilitate the discussion, $P_\mathrm{ss}\left(u_\mathrm{ss}\right)$ is expressed as the ratio
\begin{equation}\label{LinearGeometry.PtildeAlternative}
P_\mathrm{ss}\left(u_\mathrm{ss}\right)=\frac{A\left(u_\mathrm{ss}\right)}{A\left(u_\mathrm{ss}\right)+B\left(u_\mathrm{ss}\right)}
\end{equation}
with $A\left(u_\mathrm{ss}\right)$ and $B\left(u_\mathrm{ss}\right)$ defined as
%
\begin{align*}
A\left(u_\mathrm{ss}\right)&=\int_{\mathscr{P}\left(\mathcal{P}_d\right)}\exp\left(-2\sigma^{-2}V\left(\xi,u_\mathrm{ss}\right)\right)d\xi\\
B\left(u_\mathrm{ss}\right)&=\int_{\mathscr{S}\left(\mathcal{P}_d\right)\backslash\mathscr{P}\left(\mathcal{P}_d\right)}
\exp\left(-2\sigma^{-2}V\left(\xi,u_\mathrm{ss}\right)\right)d\xi.
\end{align*}
%

To achieve the maximum of $P_\mathrm{ss}$, the value $A\left(u_\mathrm{ss}\right)$ must be kept as large as possible compared to $B\left(u_\mathrm{ss}\right)$. This requires to shift the concentration of probability towards the central point of the desired set $\mathscr{P}\left(\mathcal{P}_d\right)$ and push it away from the forbidden set $\mathscr{S}\left(\mathcal{P}_d\right)\backslash\mathscr{P}\left(\mathcal{P}_d\right)$. Let $\xi_d\in\mathbb{R}^n$ be a vector containing the mid points of the intervals $\mathscr{I}_k$ corresponding to $1$'s in the desired pattern $\mathcal{P}_d$. Then the problem is to keep the state vector $\mathbf{x}\left(t\right)$ at steady-state as close as possible to $\xi_d$ with respect to some appropriate norm.

Since the desired set $\mathscr{P}\left(\mathcal{P}_d\right)$ is a hypercube, a reasonable choice of the distance measure is
\begin{equation*}
\lim_{t\to+\infty}\mathrm{E}\left[\,\left\|\mathbf{x}\left(t\right)-\xi_d\right\|_\infty\right]
\simeq\lim_{t\to+\infty}\mathrm{E}\left[\,\left\|x_\mathrm{ss}-\xi_d+\mathbf{{\delta}x}\left(t\right)\right\|_\infty\right].
\end{equation*}
However, this measure is not mathematically tractable unless the disturbance power $\frac{1}{2}\:\!\sigma^2$ is small enough to justify the approximation $\mathbf{{\delta}x}\left(t\right)\simeq0$. In this case, an approximation for the optimal static control is given by the optimization problem
\begin{align*}
\min_{\left(x_\mathrm{ss},u_\mathrm{ss}\right)\in\mathscr{S}\left(\mathcal{P}_d\right)\times\mathcal{U}_\mathrm{ss}}&
{\;\;}\left\|x_\mathrm{ss}-\xi_d\right\|_\infty\\
\mathrm{s.t.}\qquad\quad\,\,&{\!\!\!\!\!\!\!\!\!\!\!\!}-\nabla_xV\left(x_\mathrm{ss},u_\mathrm{ss}\right)=\mathbf{0}.
\end{align*}
Such a suboptimal static control places the stable equilibrium $x_\mathrm{ss}$ as close as possible to $\xi_d$ that represents the desired pattern. When $\frac{1}{2}\:\!\sigma^2$ is not negligible, one can alternatively adopt the mean squared distance measure
\begin{equation*}
\lim_{t\to+\infty}\mathrm{E}\left[\,\left\|\mathbf{x}\left(t\right)-\xi_d\right\|^2\right]\simeq\left\|x_\mathrm{ss}-\xi_d\right\|^2
+\mathrm{tr}\left\{\Sigma_\mathrm{ss}\right\}
\end{equation*}
which leads to the optimization problem
\begin{align*}
\min_{\left(x_\mathrm{ss},u_\mathrm{ss}\right)\in\mathscr{S}\left(\mathcal{P}_d\right)\times\mathcal{U}_\mathrm{ss}}&
{\;\;}\left\|x_\mathrm{ss}-\xi_d\right\|^2+\textstyle\frac{1}{2}\,\sigma^2\mathrm{tr}\left\{H^{-1}\left(x_\mathrm{ss},u_\mathrm{ss}\right)\right\}\\
\mathrm{s.t.}\qquad\quad\,\,&{\!\!\!\!\!\!\!\!\!\!\!\!}-\nabla_xV\left(x_\mathrm{ss},u_\mathrm{ss}\right)=\mathbf{0}.
\end{align*}

It is reasonable at this point to ask whether the optimization problem~\eqref{LinearGeometry.StaticControlOptimizationProblem} has always a bounded solution or it is possible for the optimal static control to be unbounded. An informal treatment of this problem comes below. Let $u_\mathrm{ss}$ be a constant control in $\mathcal{U}_\mathrm{ss}$ with a stable equilibrium $x_\mathrm{ss}\left(u_\mathrm{ss}\right)\in\mathscr{S}\left(\mathcal{P}_d\right)$. For every $\xi\in\mathscr{S}\left(\mathcal{P}_d\right)$, the value $V\left(\xi,u_\mathrm{ss}\right)$ of the energy function tends to $+\infty$ as $\left\|u_\mathrm{ss}\right\|\to+\infty$ since the components of $u_\mathrm{ss}$ are nonnegative. Under this limit, the ratio~\eqref{LinearGeometry.PtildeAlternative} tends to either $0$ or $1$, depending on which of the sets $\mathscr{S}\left(\mathcal{P}_d\right)\backslash\mathscr{P}\left(\mathcal{P}_d\right)$ or $\mathscr{P}\left(\mathcal{P}_d\right)$ contain $x_\mathrm{ss}\left(u_\mathrm{ss}\right)$. In particular, the limiting value of $P_\mathrm{ss}\left(u_\mathrm{ss}\right)$ is explicitly given by
\begin{equation*}
\lim_{\left\|u_\mathrm{ss}\right\|\to+\infty}P_\mathrm{ss}\left(u_\mathrm{ss}\right)=\left\{\!\!
\begin{array}{ll}
1 & x_\mathrm{ss}\left(u_\mathrm{ss}\right)\in\mathscr{P}\left(\mathcal{P}_d\right)\vspace{0.05in}\\
0&x_\mathrm{ss}\left(u_\mathrm{ss}\right)\in\mathscr{S}\left(\mathcal{P}_d\right)\backslash\mathscr{P}\left(\mathcal{P}_d\right).
\end{array}
\right.
\end{equation*}
This implies that $u_\mathrm{ss}^*$ in~\eqref{LinearGeometry.StaticControlOptimizationProblem} can be unbounded if under this control the equilibrium $x_\mathrm{ss}\left(u_\mathrm{ss}^*\right)$ remains inside $\mathscr{P}\left(\mathcal{P}_d\right)$.

Assume that the desired pattern $\mathcal{P}_d$ at least in one of the intervals $\left(q_{k-1},q_k\right)$ has two or more particles. Any unbounded control, necessarily squeezes these particles together, and two particles closer than a grid length cannot form a valid pattern, i.e., the probability of forming $\mathcal{P}_d$ under an unbounded control is identically $0$. It is concluded that the optimal control $u_\mathrm{ss}^*$ can be potentially unbounded only for the sparse patterns with at most one particle in each interval $\left(q_{k-1},q_k\right)$. Thus, except for such sparse patterns (that are not of much practical interest), it is not necessary to impose an upper bound on the control set $\mathcal{U}_\mathrm{ss}$ to secure a well defined solution for the optimization problem~\eqref{LinearGeometry.StaticControlOptimizationProblem}.

In theory, an unbounded control can achieve the probability $P_\mathrm{ss}\left(u_\mathrm{ss}^*\right)=1$ for a sparse pattern. In this degenerate case, the number of particles is smaller than the number of controls ($n<c+1$) so that the algebraic equation $-\nabla_xV\left(\xi_d;u_\mathrm{ss}\right)=\mathbf{0}$ can have multiple solutions for $u_\mathrm{ss}$, including a solution with an infinite magnitude. Such an unbounded control leads to a Hessian matrix $H\left(\xi_d,u_\mathrm{ss}\right)$ with an unbounded norm, which in turn, results in a zero covariance matrix according to~\eqref{LinearGeometry.SteadyStateCovariance}.
%
%
\subsubsection{Numerical Computation of a Stable Equilibrium}\label{LinearGeometry.NumericalComputationEquilibriumSection}
Two numerical techniques are proposed here to compute the stable equilibrium inside each ROA. The first method relies on the numerical solution of the ordinary differential equation~\eqref{LinearGeometry.DeterministicEquationMotion}. Starting from any arbitrary initial state inside the desired ROA, the solution to~\eqref{LinearGeometry.DeterministicEquationMotion} asymptotically approaches the unique stable equilibrium of that ROA. Although the exact equilibrium is approached only as $t\to+\infty$, after a bounded but long enough time, the solution of~\eqref{LinearGeometry.DeterministicEquationMotion} will be close enough to the equilibrium to provide an acceptable approximation for it.

The second method makes use of the proof of Theorem~\ref{LinearGeometry.ExistenceUniquenessEquilibriumTheorem} (see Appendix). This method starts from an initial vector $x^0\in\mathscr{S}\left(\nu\right)$ and generates the sequence of vectors $x^1,x^2,x^3,\ldots$ from the recursive equation $x^{k+1}=g\bigl(x^k\bigr)$, $k=0,1,2,\ldots$ until the distance between two successive vectors drops below a given threshold. Then the last vector in this sequence is taken as an approximation for the equilibrium. At each step $k$, the $i$th component of $g\bigl(x^k\bigr)$ is computed by numerically solving (e.g. using Newton's method) the algebraic equation
\begin{equation*}
f_i\bigl(x_1^k,x_2^k,\ldots,x_{i-1}^k,y,x_{i+1}^k,\ldots,x_n^k,u_\mathrm{ss}\bigr)=0
\end{equation*}
for $y$, as explained in Theorem~\ref{LinearGeometry.ExistenceUniquenessEquilibriumTheorem}.
%
%
\subsubsection{Approximate Settling Time}\label{LinearGeometry.SettlingTimeSection}
Determining an estimate for the settling time $t_f-t_d$ is the last step to complete the design of the static control. This quantity closely depends on the second smallest eigenvalue (in absolute value) of the Fokker-Planck operator~\cite{ART.LiberzonBrockett.00}
\begin{equation*}
\mathcal{L}\left(\cdot\right)=\nabla_x\cdot\left({\nabla_x}V\left(x,u_\mathrm{ss}\right)\left(\cdot\right)
+\frac{1}{2}\;\!\sigma^2{\nabla_x}\left(\cdot\right)\right).
\end{equation*}
Note that the first eigenvalue of this operator (smallest in the absolute value) is $0$ associated with the steady-state solution of the Fokker-Planck equation. Let $l_2\left(\mathcal{L}\right)<0$ be the second smallest eigenvalue of $\mathcal{L}\left(\cdot\right)$ in the absolute value. Then a rule of thumb for computation of the settling time is given by
\begin{equation*}
t_f-t_d\simeq5\left|l_2\left(\mathcal{L}\right)\right|^{-1}.
\end{equation*}

Direct computation of $l_2\left(\mathcal{L}\right)$ is generally a difficult task. In the case of this paper, the settling time can be approximated using an alternative method. This approximation is based on the observation that the settling time $t_f-t_d$ is the time required for the state vector $\mathbf{x}\left(t\right)$ to reach a stationary regime starting from an initial state inside the ROA $\mathscr{S}\left(\mathcal{P}_d\right)$. The temporal evolution of the state vector is governed by the stochastic differential equation~\eqref{LinearGeometry.StochasticEquationMotion} under the constant control $\mathbf{u}\left(t\right)=u_\mathrm{ss}$. The sample trajectories of $\mathbf{x}\left(t\right)$ almost surely remain inside the same ROA $\mathscr{S}\left(\mathcal{P}_d\right)$ and tend towards the equilibrium $x_\mathrm{ss}$. The large energy gradient near the boundary (large repulsive forces between closely placed point charges) of the ROA strongly pushes the trajectories towards the equilibrium so that they rapidly move away from the boundary and spend most of their transition time near the equilibrium. This justifies linearizing the nonlinear dynamics~\eqref{LinearGeometry.StochasticEquationMotion} around the equilibrium $x_\mathrm{ss}$ and computing the settling time from the linear model~\eqref{LinearGeometry.LinearizedDynamicalSystem} rather than the original nonlinear model~\eqref{LinearGeometry.StochasticEquationMotion}. Then the approximate settling time is expressed in terms of the smallest eigenvalue $l_1\left(\cdot\right)$ of the positive definite Hessian matrix $H\left(x_\mathrm{ss},u_\mathrm{ss}\right)$ as
\begin{equation}\label{LinearGeometry.SettlingTimeRule}
t_f-t_d\simeq5\:\!l_1^{-1}\left(H\left(x_\mathrm{ss},u_\mathrm{ss}\right)\right).
\end{equation}
%
%
\subsection{Optimal Electrode Positions}\label{LinearGeometry.OptimalElectrodeLocationsSection}
In the procedure proposed in Section~\ref{LinearGeometry.StaticControlSection} for design of the static control, the positions $q_0,q_1,\ldots,q_c$ of the electrodes are assumed fixed and given in advance. However, the specific choice of these positions, that are directly involved in the shape of the energy function~\eqref{LinearGeometry.EnergyFunction}, can affect the performance of the static control for better or worse. To maximize the probability of forming a desired pattern, the electrode positions can be optimized simultaneously with the static control, of course within certain physical constraints.

As mentioned before, the electrode positions are integer multiples of the grid size $d_0$. Let $N_k$ be an integer quantifying
the distance between the electrodes $k$ and $k-1$ as
\begin{equation*}
q_k-q_{k-1}=d_0N_k.
\end{equation*}
Then, in terms of $N_1,N_2,\ldots,N_c$, the electrode positions are given by
\begin{equation}\label{LinearGeometry.ElectrodePosition}
q_k=q_0+d_0\sum_{j=1}^kN_j,\quad{k=1,2,\ldots,c}.
\end{equation}
Since the length $q_c-q_0$ of the line segment is fixed and includes exactly $N$ grid cells, the integers $N_1,N_2,\ldots,N_c$ must satisfy the equality constraint $N_1+N_2+\cdots+N_c=N$. Moreover, technology limitations do not allow to fabricate the electrodes closer than $d_0N_\mathrm{min}$, imposing the inequality constraints $N_1,N_2,\ldots,N_c\geqslant{N}_\mathrm{min}$ for some integer $N_\mathrm{min}$.

Substituting~\eqref{LinearGeometry.ElectrodePosition} into the energy function~\eqref{LinearGeometry.EnergyFunction}, the dependence of the probability~\eqref{LinearGeometry.PinfExplititForm} on the energy function~\eqref{LinearGeometry.EnergyFunction} results in an explicit expression $P_\mathrm{ss}\left(u_\mathrm{ss},N_1,N_2,\ldots,N_c\right)$ for this probability. Then the joint optimization of the static control and the electrode positions can be formulated as the mixed integer nonlinear program (MINLP)~\cite{ART.KesavanEtAl.04,BOOK.TawarmalaniSahinidis.02}
\begin{equation*}
\begin{array}{cl}
\displaystyle\max_{u_\mathrm{ss},N_1,N_2,\ldots,N_c} & P_\mathrm{ss}\left(u_\mathrm{ss},N_1,N_2,\ldots,N_c\right) \vspace{0.02in}\\
\mathrm{s.t.} & u_\mathrm{ss}\in\mathcal{U}_\mathrm{ss} \vspace{0.08in}\\
&N_1+N_2+\cdots+N_c=N \vspace{0.08in}\\
&N_1,N_2,\ldots,N_c\geqslant{N}_\mathrm{min}.
\end{array}
\end{equation*}
%
%
\subsection{Dynamic Control}\label{LinearGeometry.DynamicControlSection}
At the initial time $t=0$, the state vector $\mathbf{x}\left(0\right)$ is randomly distributed in the state space $\mathscr{S}_0$ according to some probability density function $p_0$. The dynamic control is an open-loop control applied to the system of particles during $\left[0,t_d\right)$ to bring the random initial state inside the desired ROA $\mathscr{S}\left(\mathcal{P}_d\right)$ with the highest probability. The dynamic control proposed in this paper consists of a sequence of static controls, each one designed through a procedure similar to Section~\ref{LinearGeometry.StaticControlSection}. The concept of a multistage control for directed self-assembly has been established in~\cite{ART.SolisEtAl.10B} and is illustrated in \Fig~\ref{LinearGeometry.DynamicControlStages}.
\begin{figure}[h]
\begin{center}
\includegraphics[scale=1]{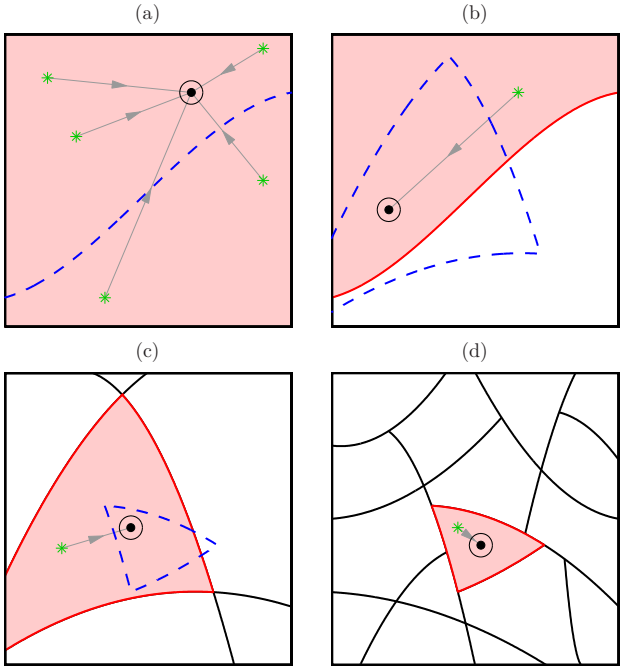}\\
\caption{Multistage dynamic control consisting of a sequence of static controls. The large squares represent the entire state space and the solid lines inside these squares show the boundary of the ROAs. In the first stage (a), the entire state space is a single ROA with a unique stable equilibrium marked by a small disk. The state vector can be initially any point in this single ROA (marked by the asterisks), which moves towards the stable equilibrium and eventually stays near this point (shown as a circular region) with a high probability. The stable equilibrium is designed to be inside the desired ROA (shaded region) of the next stage (b) so that the state vector will be inside this desired ROA at the end of the first stage. This procedure is repeated in transition from (b) to (c) and from (c) to (d). The static control in (d) eventually creates the desired pattern.}\label{LinearGeometry.DynamicControlStages}
\end{center}
\end{figure}

Consider a constant control vector $u_d^1$ whose first and last components are positive and the rest of its components are $0$ (only the electrodes at $q_0$ and $q_c$ are active). Under this control, all particles are distributed inside a single large interval $\left(q_0,q_c\right)$ and the system has a single ROA $\mathscr{S}_d^1$ covering the entire state space $\mathscr{S}_0$. Application of this constant control at $t=0$ drives the state vector towards the unique  equilibrium of $\mathscr{S}_d^1$ regardless of its initial value. After a settling time of $t_d^1$, the constant control $u_d^1$ is switched to another constant control $u_d^2$ with more than two positive components (e.g. the electrode in the middle is activated). Under this new control, the system has more than one ROA while a specific one of these multiple ROAs (denoted by $\mathscr{S}_d^2$) is the target set of the state vector at $t=t_d^1$. The optimal value of $u_d^1$ at the first stage is determined to maximize the probability of the state vector being inside this target ROA at $t=t_d^1$, i.e., $\mathbf{x}\left(t_d^1\right)\in\mathscr{S}_d^2$. This procedure is repeated in $D$ stages by activating more controls at each stage while getting closer to the final desired equilibrium. The target set $\mathscr{S}_d^{D+1}$ of the last stage is the ROA containing the desired pattern in the static control problem, i.e., $\mathscr{S}_d^{D+1}=\mathscr{S}\left(\mathcal{P}_d\right)$. At the end of the last stage of the dynamic control, the static control $u_\mathrm{ss}$ activates all electrodes to create the desired pattern.

Design of the target ROAs $\mathscr{S}_d^2,\mathscr{S}_d^3,\ldots,\mathscr{S}_d^D$ is explained below by an example using \Fig~\ref{LinearGeometry.FigDynamicControlExample}. In this figure, a system of $n=8$ particles with $c+1=5$ controls and  $N=16$ grid cells is illustrated. The dynamic control is designed with $D=2$ stages and the following sequence of controls: in the first stage, only the electrodes at $q_0$ and $q_4$ are active, while in the second and last stage the electrode at $q_2$ is turned on. The electrodes at $q_1$ and $q_3$ are simultaneously activated in the static control phase. According to this sequence, the controls $u_d^1$, $u_d^2$, and $u_\mathrm{ss}$ have the structure
\begin{align*}
u_d^1&=\left(\oplus,0,0,0,\oplus\right)\\
u_d^2&=\left(\oplus,0,\star,0,\oplus\right)\\
u_\mathrm{ss}&=\left(\oplus,\star,\star,\star,\oplus\right),
\end{align*}
where $\oplus$ and $\star$ denote positive and nonnegative components, respectively. The target ROA $\mathscr{S}_d^2$ is a subset of the state space with $5$ particles on the left side of~$q_2$ and $3$ particles on its right side, and is explicitly given~by
\begin{equation*}
\mathscr{S}_d^2=\left\{x|\,q_0<x_1<\cdots<x_5<q_2<x_6<x_7<x_8<q_4\right\}.
\end{equation*}
Similarly, $\mathscr{S}_d^3$ is a subset with $3$, $2$, $1$, and $2$ particles in the intervals $\left(q_0,q_1\right)$, $\left(q_1,q_2\right)$, $\left(q_2,q_3\right)$, and $\left(q_3,q_4\right)$, respectively.

Consider the family of piecewise constant controls
\begin{equation}\label{LinearGeometry.PiecewiseConstantControlFamily}
\mathbf{u}\left(t\right)=\left\{\!\!
\begin{array}{ll}
u_d^i\,, & t\in\left[t_d^{i-1},t_d^i\right),\quad{i=1,2,\ldots,D}\vspace{0.05in} \\
u_\mathrm{ss}\,, & t\in\left[t_f,+\infty\right),
\end{array}\right.
\end{equation}
where the constants $0=t_d^0<t_d^1<t_d^2<\cdots<t_d^D=t_d<t_f$ are the switching times of the control and $t_f$ is the final time. For $i=1,2,\ldots,D$, define the increasing sequence of control sets $\mathcal{U}_d^1\subset\mathcal{U}_d^2\subset\cdots\subset\mathcal{U}_d^D\subset\mathcal{U}_\mathrm{ss}$ with $\mathcal{U}_\mathrm{ss}$ given by~\eqref{LinearGeometry.SteadyStateControlSet}, $\mathcal{U}_d^1$ defined as
\begin{equation*}
\mathcal{U}_d^1=\left\{u|u_0>0,u_1=0,\ldots,u_{c-1}=0,u_c>0\right\},
\end{equation*}
and with the property that the controls in $\mathcal{U}_d^{i+1}$ can possess at least one more nonzero component than those in $\mathcal{U}_d^i$. It is assumed that at each stage $i=1,2,\ldots,D$, the constant control $u_d^i$ is in the control set $\mathcal{U}_d^i$, while the static control $u_\mathrm{ss}$ belongs to $\mathcal{U}_\mathrm{ss}$.

The goal is to obtain within the class of controls~\eqref{LinearGeometry.PiecewiseConstantControlFamily}, the one that solves the optimal control problem of Section~\ref{LinearGeometry.ModelSection}. The controls in this class consist of a static part applied for $t\geqslant{t_d}$ and a dynamic part during $0\leqslant{t}<t_d$. Optimization of the static part was discussed in Section~\ref{LinearGeometry.StaticControlSection} and the procedure for optimizing the dynamic part is presented below.

Consider the sequence $\mathscr{S}_d^2,\mathscr{S}_d^3,\ldots,\mathscr{S}_d^D$ of ROAs associated with the sequence $\mathcal{U}_d^1,\mathcal{U}_d^2,\ldots,\mathcal{U}_d^D$ of control sets, and for $i=1,2,\ldots,D$ define the conditional probabilities
\begin{equation}\label{LinearGeometry.PdiDefinition}
P_d^i\left(u_d^i\right)=\lim_{t\to+\infty}\Pr\left\{\mathbf{x}\left(t\right)\in\mathscr{S}_d^{i+1}\:\!|\:\!\mathbf{x}\left(t_d^{i-1}\right)
\in\mathscr{S}_d^i\right\},
\end{equation}
where $\mathbf{x}\left(t\right)$ is the state of~\eqref{LinearGeometry.StochasticEquationMotion} under the constant control $u_d^i$ applied at $t=t_d^{i-1}$. As discussed in Section~\ref{LinearGeometry.StaticControlSection}, these probabilities are explicitly expressed as
\begin{equation*}
P_d^i\left(u_d^i\right)=\frac{\displaystyle\int_{\mathscr{S}_d^{i+1}}\exp\left(-2\sigma^{-2}V\left(\xi,u_d^i\right)\right)d\xi}
{\displaystyle\int_{\mathscr{S}_d^i}\exp\left(-2\sigma^{-2}V\left(\xi,u_d^i\right)\right)d\xi}\,.
\end{equation*}
It is assumed that the time durations $t_d^i-t_d^{i-1}$ between the switching times are long enough for the system to reach the steady-state before application of a new segment of the control (as discussed in Section~\ref{LinearGeometry.SettlingTimeSection}). Under this condition, the following approximation holds:
\begin{equation*}
\Pr\left\{\mathbf{x}\left(t_d^i\right)\in\mathscr{S}_d^{i+1}\:\!|\:\!\mathbf{x}\left(t_d^{i-1}\right)
\in\mathscr{S}_d^i\right\}\simeq{P}_d^i\left(u_d^i\right).
\end{equation*}
At the initial time $t=0$, the state vector belongs to $\mathscr{S}_d^1=\mathscr{S}_0$ with probability $1$. Thus, the probability of formation of a desired pattern $\mathcal{P}_d$ under the control~\eqref{LinearGeometry.PiecewiseConstantControlFamily} at $t=t_f$ is given in terms of the conditional probabilities $P_\mathrm{ss}\left(u_\mathrm{ss}\right)$ and $P_d^i\left(u_d^i\right)$, $i=1,2,\ldots,D$ by the product
\begin{equation}\label{LinearGeometry.ContinuousProbabilitySuccess}
\Pr\left\{\mathbf{x}\left(t_f\right)\in\mathscr{P}\left(\mathcal{P}_d\right)\right\}\simeq{P}_\mathrm{ss}\left(u_\mathrm{ss}\right)
\prod_{i=1}^DP_d^i\left(u_d^i\right).
\end{equation}
Note that this approximation tends to exact as $t_f-t_d\to+\infty$ and $t_d^i-t_d^{i-1}\to+\infty$, $i=1,2,\ldots,D$.

To maximize the probability~\eqref{LinearGeometry.ContinuousProbabilitySuccess}, each multiplicative term must be maximized independently with respect to its argument. For the static control term, the optimization problem~\eqref{LinearGeometry.StaticControlOptimizationProblem} was already discussed in Section~\ref{LinearGeometry.StaticControlSection}. For the rest of the terms, the optimal controls $u_d^{i*}$, $i=1,2,\ldots,D$ are obtained by solving the optimization problems
\begin{equation}\label{LinearGeometry.DynamicControlOptimizationProblem}
u_d^{i*}\in\arg\max_{u_d^i\in\mathcal{U}_d^i}P_d^i\left(u_d^i\right).
\end{equation}
In terms of these optimal controls, the maximum probability of forming a desired pattern $\mathcal{P}_d$ at a large final time $t_f$ achieved by a control in the class of controls~\eqref{LinearGeometry.PiecewiseConstantControlFamily} is given by
\begin{equation}\label{LinearGeometry.MaximumPatternProbability}
\max\Pr\left\{\mathbf{x}\left(t_f\right)\in\mathscr{P}\left(\mathcal{P}_d\right)\right\}\simeq{P}_\mathrm{ss}\left(u_\mathrm{ss}^*\right)
\prod_{i=1}^DP_d^i\left(u_d^{i*}\right).
\end{equation}

The activation sequence of the electrodes is not unique and can be regarded as an additional optimization variable. For any specific activation sequence, the maximum probability to form a desired pattern is obtained from~\eqref{LinearGeometry.MaximumPatternProbability}. Then in a higher level of the optimization process, the maximum of these optimized probabilities is determined over all possible activation sequences. For the small size problem of \Fig~\ref{LinearGeometry.FigDynamicControlExample} with only $13$ possible activation sequences, this level of optimization can be performed by simply enumerating all sequences. For a problem of larger size, more advanced techniques can be developed based on the outer approximation~\cite{ART.KesavanEtAl.04} or branch and bound~\cite{BOOK.TawarmalaniSahinidis.02} methods.
%
%
\subsection{Numerical Results}
We applied the design procedure developed in this section to the example of \Fig~\ref{LinearGeometry.FigDynamicControlExample}. In this figure, self-assembly of $n=8$ particles using $c+1=5$ electrodes is considered along a line segment. The line segment is partitioned into $N=16$ cells and the distance between the electrodes is assumed to be $1$ unit of length. Throughout the design procedure and its following simulations the disturbance power is set at $\sigma=0.45$. The goal is to generate a desired pattern of
\begin{equation*}
\mathcal{P}_d=\left(0,1,1,1,0,0,1,1,0,0,1,0,0,1,0,1\right).
\end{equation*}

For design of both static and dynamic controls, the constrained optimization problem~\eqref{LinearGeometry.GaussianApproximationOptimizationProblem} was utilized to approximate the original problems~\eqref{LinearGeometry.StaticControlOptimizationProblem} and~\eqref{LinearGeometry.DynamicControlOptimizationProblem}. This constrained optimization problem, was converted to an unconstrained problem by solving the constraint~\eqref{LinearGeometry.GaussianApproximationOptimizationProblemB} for $x_\mathrm{ss}$ using the second numerical procedure proposed in Section~\ref{LinearGeometry.NumericalComputationEquilibriumSection}. The resulting unconstrained problem was solved using the \texttt{fminsearch} function of $\mbox{MATLAB}^\circledR$.

The designed optimal control consists of a static control and a two-stage dynamic control as illustrated in \Fig~\ref{LinearGeometry.FigDesignedContrrol}. The optimal values of the control vector are shown for the two stages of the dynamic control and for the static control in \Figs~\ref{LinearGeometry.FigDesignedContrrol}(a), \ref{LinearGeometry.FigDesignedContrrol}(b), and \ref{LinearGeometry.FigDesignedContrrol}(c), respectively. Further, the most likely patterns formed at the end of each stage are illustrated in these figures. The small boxes in these figures represent the locations of the electrodes, while the disks mark the optimal equilibrium of each stage. Using the approximation method of Section~\ref{LinearGeometry.SettlingTimeSection}, the switching times of the control were computed as $t_d^1=0.67$, $t_d^2=0.98$, and $t_f=1.13$. Also, the highest probability of success at each stage was obtained as $P_d^1\left(u_d^{1*}\right)\simeq1$, $P_d^2\left(u_d^{2*}\right)\simeq1$, and $P_\mathrm{ss}\left(u_\mathrm{ss}^*\right)=0.94$, which lead to the total probability of success
\begin{equation*}
P_d^1\left(u_d^{1*}\right)P_d^2\left(u_d^{1*}\right)P_\mathrm{ss}\left(u_\mathrm{ss}^*\right)\simeq0.94.
\end{equation*}
\begin{figure}[h]
\begin{center}
\includegraphics[scale=1]{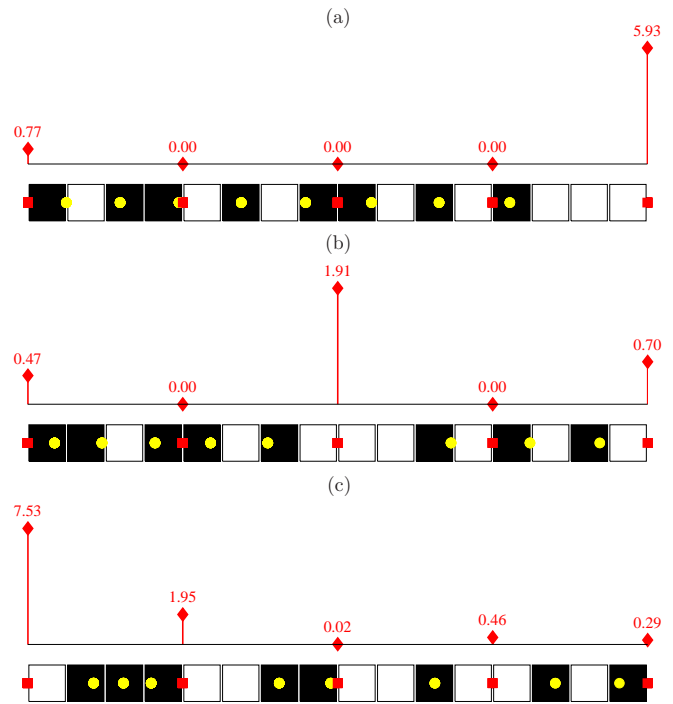}\\
\caption{Designed optimal control: (a) first stage of the dynamic control; (b) second stage of the dynamic control; (c) static control. The vertical lines with a number on their top represent the components of the control vector. The illustrated pattern at each stage represent the most likely pattern generated at the end of that stage. The small boxes mark the locations of the electrodes and the disks show the equilibrium of each stage. }\label{LinearGeometry.FigDesignedContrrol}
\end{center}
\end{figure}

Under this control, the deterministic dynamical system~\eqref{LinearGeometry.DeterministicEquationMotion} and its stochastic version~\eqref{LinearGeometry.StochasticEquationMotion} were numerically simulated. The results of these simulations are shown in \Fig~\ref{LinearGeometry.FigSiumulationResults} for both deterministic (thick line) and stochastic (thin line) models. The trajectory of the particles start at random positions (small boxes) and end at the optimal final equilibrium (small disks) that represents the desired pattern shown on the top. The vertical axis in this figure shows progress in time while the horizontal axis stands for the line segment on which the particles move. The heavy vertical lines represent the energy barriers created by the electrodes which cannot be crossed by the particles. Note that the specific sample path illustrated in \Fig~\ref{LinearGeometry.FigSiumulationResults} succeeds to create the desired pattern; however, not all sample paths of the stochastic system end up with the desired pattern. For example, the sample path of \Fig~\ref{LinearGeometry.FigSiumulationResults} could fail if the deviation marked by the dashed ellipse would occur shortly later. With the probability of success estimated as $0.94$, the sample paths fail to form the desired pattern at an average rate of $6\%$.
\begin{figure}[h]
\begin{center}
\includegraphics[scale=1]{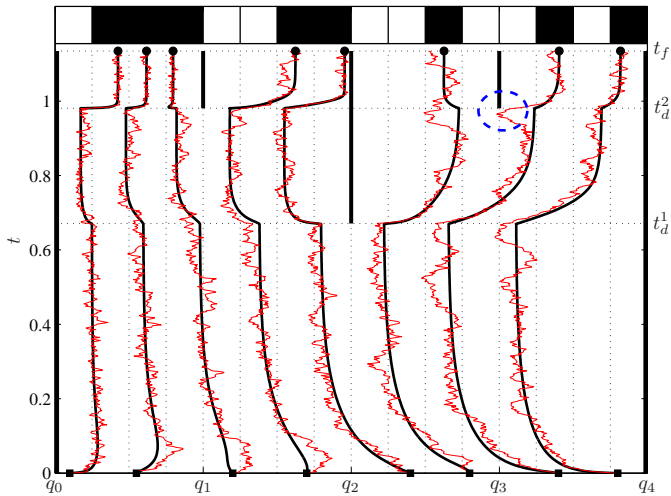}\\
\caption{Simulation results for deterministic (thick line) and stochastic (thin line) models. The vertical axis represents time while the horizontal axis stands for the line segment on which the particles move. The heavy vertical lines represent the energy barriers created by the electrodes which cannot be crossed by the particles. Not all sample paths of the stochastic system end up with the desired pattern; on average, $6\%$ of them fail to form this specific pattern. The sample path of \Fig~\ref{LinearGeometry.FigSiumulationResults} could fail if the deviation marked by the dashed ellipse would occur shortly later.}\label{LinearGeometry.FigSiumulationResults}
\end{center}
\end{figure}
%
%
\section{Control Design for a Discrete Ising Model}\label{LinearGeometry.DiscreteModelSection}
In the prior work on directed self-assembly, the system of particles has been described by a discrete Ising model~\cite{ART.Cipra.87} rather than the continuous model of this paper~\cite{ART.SolisEtAl.10A,ART.SolisEtAl.10B,ART.LakerveldEtAl.12}. In such a discrete model, the particles can occupy only a finite set of positions along the line---at most one particle in each position. As shown in \Fig~\ref{LinearGeometry.FigDynamicControlExample}, these positions are located at the centers of the intervals $\mathscr{I}_k$. In this model, the discrete state of the particles is a vector $\hat{\mathbf{x}}\left(t\right)$ in $\mathbb{R}^n$ taking its value in the discrete state space $\left\{\xi_1,\xi_2,\ldots,\xi_S\right\}\subset\mathbb{R}^n$ at any time $t\geqslant0$. Each vector $\xi_k$ in this discrete state space corresponds to a pattern $\mathcal{P}_k$ and its $n$ components represent the discrete locations of $n$ particles along the line. The number $S$ of the elements in the discrete state space is the same as the number of patterns that can be formed by placement of $n$ particles in $N>n$ positions and is given by the combination $S=\left(\substack{N\\{n}}\right)$. Equivalently, the discrete state of the particles at time $t$ can be described by an integer $z\left(t\right)\in\left\{1,2,\ldots,S\right\}$ such that $\hat{\mathbf{x}}\left(t\right)=\xi_{z\left(t\right)}$.

Similar to the continuous model~\eqref{LinearGeometry.StochasticEquationMotion}, the discrete state $z\left(t\right)$ is characterized by a stochastic process, namely by a continuous-time Markov chain~\cite{BOOK.GuoHernandezLerma.09}. For a continuous-time Markov chain, the evolution of probability is described by a \textit{master equation}, the analogue of the Fokker-Planck equation in the continuous model. For $i=1,2,\ldots,S$ define the probabilities
\begin{equation*}
\pi_i\left(t\right)=\Pr\left\{\hat{\mathbf{x}}\left(t\right)=\xi_i\right\}=\Pr\left\{z\left(t\right)=i\right\}.
\end{equation*}
These probabilities evolve in time according to the set of linear differential equations
\begin{equation*}
\dot{\pi}_i\left(t\right)=-\sum_{\substack{j=1\\j\neq{i}}}^{S}\lambda_{ji}\left(\mathbf{u}\left(t\right)\right)\pi_i\left(t\right)
+\sum_{\substack{j=1\\j\neq{i}}}^{S}\lambda_{ij}\left(\mathbf{u}\left(t\right)\right)\pi_j\left(t\right)
\end{equation*}
defined for $i=1,2,\ldots,S$. The initial state of these equations (the initial probability distribution) is assumed known and the nonnegative scalars $\lambda_{ij}\left(\mathbf{u}\left(t\right)\right)$, $i\neq{j}=1,2,\ldots,S$ describe the transition rates from pattern $j$ to pattern $i$. The contribution of the control to the system dynamics is reflected in the model through the dependence of the transition rates on the control vector $\mathbf{u}\left(t\right)$. By collecting the probabilities $\pi_i\left(t\right)$ in a single vector $\pi\left(t\right)=\left[\pi_1\left(t\right)\;\pi_2\left(t\right)\;\cdots\;\pi_S\left(t\right)\right]^\mathrm{T}$,~the master equation can be written in the compact form
\begin{equation}\label{LinearGeometry.MasterEquation}
\dot{\pi}\left(t\right)=\Lambda\left(\mathbf{u}\left(t\right)\right)\pi\left(t\right),
\end{equation}
where $\Lambda\left(\mathbf{u}\left(t\right)\right)$ is a $S\times{S}$ matrix with off-diagonal elements $\lambda_{ij}\left(\mathbf{u}\left(t\right)\right)$ and diagonal elements
\begin{equation*}
\lambda_{ii}\left(\mathbf{u}\left(t\right)\right)=-\sum_{\substack{j=1\\j\neq{i}}}^S\lambda_{ji}\left(\mathbf{u}\left(t\right)\right).
\end{equation*}

For any fixed $t\geqslant0$, the transition rate $\lambda_{ij}\left(\mathbf{u}\left(t\right)\right)$ from a pattern $j$ to another pattern $i$ exponentially decreases with the difference between the energy $V\left(\xi_j,\mathbf{u}\left(t\right)\right)$ of the pattern $j$ and
the energy barrier $E_{ij}\left(\mathbf{u}\left(t\right)\right)$ between the two patterns. The mapping  $\lambda_{ij}:\mathbb{R}^{c+1}\to\mathbb{R}^+$ that maps the instantaneous value of the control vector into the instantaneous value of the transition rate has the form of~\cite{ART.KangWeinberg.92}
\begin{equation*}
\lambda_{ij}\left(u\right)=\exp\left(-2\sigma^{-2}\left(E_{ij}\left(u\right)-V\left(\xi_j,u\right)\right)\right),\quad{i\neq{j}},
\end{equation*}
where $\frac{1}{2}\:\!\sigma^2={\kappa}k_BT$ is a constant increasing with the absolute temperature $T$, and the energy barrier satisfies the conditions
\begin{equation*}
E_{ij}\left(u\right)=E_{ji}\left(u\right)\geqslant{V}\left(\xi_j,u\right),\quad{u\in\mathcal{U}}.
\end{equation*}

For the explicit form of $E_{ij}\left(u\right)$, the reader is referred to~\cite{ART.LakerveldEtAl.12}. For the analysis of this paper, it is enough to know that the energy barrier between two patterns is positive, and it is of infinite magnitude if in transition from one pattern to another a particle has to jump over an active electrode. This latter property is caused by the unbounded level of energy at a point charge that represents an active electrode. Due to this property, the transition rates between two such patterns are identically $0$ which parallels the property observed in the continuous model that a particle cannot jump over an active electrode.

Because of this property, the central role of the ROAs in the continuous model has an analogue in the discrete model. Each ROA contains a certain subset of the patterns $\xi_1,\xi_2,\ldots,\xi_S$, and similar to the continuous model, the ROAs partition the discrete state space $\left\{\xi_1,\xi_2,\ldots,\xi_S\right\}$ into $R$ subsets ($R$ is the number of ROAs given by~\eqref{LinearGeometry.ROANumber}). The ROAs are marked by unbounded energy barriers encircling them, and such infinite energy barriers block the transition of a pattern inside a ROA to any pattern outside it. This implies that for any pair of patterns $i$ and $j$ inside two different ROAs, both othe transition rates $\lambda_{ij}\left(\mathbf{u}\left(t\right)\right)$ and $\lambda_{ji}\left(\mathbf{u}\left(t\right)\right)$ are identically $0$. As a result, the state-space equation~\eqref{LinearGeometry.MasterEquation} is decomposed into $R$ decoupled smaller state-space equations
\begin{equation*}
\dot{\pi}_i^\mathrm{ROA}\left(t\right)=\Lambda_i^\mathrm{ROA}\left(\mathbf{u}\left(t\right)\right)\
\pi_i^\mathrm{ROA}\left(t\right),\quad{i=1,2,\ldots,R},
\end{equation*}
where $\pi_i^\mathrm{ROA}\left(t\right)$ is a column vector containing the probabilities of the patterns belonging to the $i$th ROA and $\Lambda_i^\mathrm{ROA}\left(\mathbf{u}\left(t\right)\right)$ is its corresponding square block of $\Lambda\left(\mathbf{u}\left(t\right)\right)$. Equivalently, the Markov chain $z\left(t\right)$ is reducible into $R$ smaller Markov chains $z_1\left(t\right),z_2\left(t\right),\ldots,z_R\left(t\right)$ which are statistically independent conditioned on the initial value $z\left(0\right)$. Each of the $R$ decoupled state-space equations has a steady-state solution which follows a Gibbs probability distribution.

This property helps to determine a simple expression for the discrete counterpart of the conditional probability~\eqref{LinearGeometry.PinfDefinition}. Let $\mathcal{P}_d$ be a desired pattern inside the ROA $\mathscr{S}\left(\mathcal{P}_d\right)$ and assume that it is represented in the discrete state space by the vector $\xi_d$. Similar to~\eqref{LinearGeometry.PinfDefinition}, define the conditional probability
\begin{equation*}
\Pi_\mathrm{ss}\left(u_\mathrm{ss}\right)=\lim_{t\to+\infty}\Pr\left\{\hat{\mathbf{x}}\left(t\right)
=\xi_d\:\!|\;\!\hat{\mathbf{x}}\left(t_d\right)\in\mathscr{S}\left(\mathcal{P}_d\right)\right\}
\end{equation*}
under the constant control $\mathbf{u}\left(t\right)=u_\mathrm{ss}$ applied at $t=t_d$. Using the analysis above, this conditional probability can be explicitly expressed as
\begin{equation*}
\Pi_\mathrm{ss}\left(u_\mathrm{ss}\right)=\frac{\exp\left(-2\sigma^{-2}V\left(\xi_d,u_\mathrm{ss}\right)\right)}
{\displaystyle\sum_{\xi_k\in\mathscr{S}\left(\mathcal{P}_d\right)}^{}\exp\left(-2\sigma^{-2}V\left(\xi_k,u_\mathrm{ss}\right)\right)}\,.
\end{equation*}
Similarly, the discrete counterpart of~\eqref{LinearGeometry.PdiDefinition} is defined as
\begin{equation*}
\Pi_d^i\left(u_d^i\right)=\lim_{t\to+\infty}\Pr\left\{\hat{\mathbf{x}}\left(t\right)
\in\mathscr{S}_d^{i+1}\:\!|\;\!\hat{\mathbf{x}}\left(t_d^{i-1}\right)\in\mathscr{S}_d^i\right\}
\end{equation*}
under the constant control $\mathbf{u}\left(t\right)=u_d^i$ for $t\geqslant{t}_d^{i-1}$, and is explicitly expressed as
\begin{equation*}
\Pi_d^i\left(u_d^i\right)=\frac{\displaystyle\sum_{\xi_k\in\mathscr{S}_d^{i+1}}\exp\left(-2\sigma^{-2}V\left(\xi_k,u_d^i\right)\right)}
{\displaystyle\sum_{\xi_k\in\mathscr{S}_d^i}^{}\exp\left(-2\sigma^{-2}V\left(\xi_k,u_d^i\right)\right)}\,.
\end{equation*}
Finally, the probability of successful formation of a pattern $\mathcal{P}_d$ at time $t=t_f$ under the control~\eqref{LinearGeometry.PiecewiseConstantControlFamily} is approximately given~as
\begin{equation}\label{LinearGeometry.DiscreteProbabilitySuccess}
\Pr\left\{\hat{\mathbf{x}}\left(t_f\right)=\xi_d\right\}
\simeq\Pi_\mathrm{ss}\left(u_\mathrm{ss}\right)\prod_{i=1}^D\Pi_d^i\left(u_d^i\right).
\end{equation}

Evidently, the procedure of control design for the discrete model parallels the one proposed for the continuous model: it is enough to maximize the payoff function~\eqref{LinearGeometry.DiscreteProbabilitySuccess} instead of~\eqref{LinearGeometry.ContinuousProbabilitySuccess}. Further, each of these payoff functions closely approximates the other one as the sums in~\eqref{LinearGeometry.DiscreteProbabilitySuccess} are discrete approximations of the integrals in~\eqref{LinearGeometry.ContinuousProbabilitySuccess}. The only major difference in the control design procedure is in computation of the practical values of the settling times $t_f-t_d$ and $t_d^{i+1}-t_d^i$. For the continuous model, these quantities are determined in terms of the eigenvalues of the Fokker-Planck operator as explained in Section~\ref{LinearGeometry.SettlingTimeSection}. For the discrete model, the settling time $t_f-t_d$ is determined in terms of the smallest nonzero eigenvalue of the matrix $\Lambda_d^\mathrm{ROA}\left(u_\mathrm{ss}\right)$ (smallest in the absolute value), where the subscript $d$ refers to the ROA containing the desired pattern.
%
%
\section{Conclusion}
Directed self-assembly of charged particles along line segments has been considered. In the assembly process, a number of particles move in one dimension along a line segment under the repulsive forces experienced from interactions with other particles, and the process is directed towards formation of a desired pattern by external forces applied from charged electrodes located at fixed points in the line segment. The potentials of these electrodes are precisely controlled in time so that the formation of a desired pattern is secured with the highest probability despite the inherent uncertainty in the initial position and the dynamical behaviors of the particles. A challenging aspect of such a control is that the actual positions of the particles are not measurable during the assembly process. Two models have been proposed to describe the uncertain dynamics of the particles. The first model which is mathematically more tractable consists of a set of nonlinear stochastic differential equations and is suitable for larger particles of micrometer scale. The second model is a discrete Ising model consisting of a continuous-time Markov chain and is more accurate for nanometer scale particles. A class of piecewise constant controls has been proposed for these models and the optimal values of the constant pieces have been determined as the solutions to certain optimization problems.
%
%
\appendix[Proof of Theorem~\ref{LinearGeometry.ExistenceUniquenessEquilibriumTheorem}]
%
%
Consider $\mathscr{S}\left(\nu\right)$ with $\left\|\nu\right\|_1=n$. The goal is to show that the algebraic equation
\begin{equation}\label{LinearGeometry.AlgebraicEquation}
f\left(x,u_\mathrm{ss}\right)\triangleq-\nabla_xV\left(x,u_\mathrm{ss}\right)=\mathbf{0}
\end{equation}
has exactly one solution $x_\mathrm{ss}$ in $\mathscr{S}\left(\nu\right)$ and that the Jacobian matrix of the vector field $f\left(\,\cdot\,,u_\mathrm{ss}\right)$ is negative definite over the set $\mathscr{S}\left(\nu\right)$. To that end, denote the $k$th component of $f\left(x,u_\mathrm{ss}\right)$ by $f_k\left(x,u_\mathrm{ss}\right)$ and let $u_\mathrm{ss}^j$ be the $j$th component of $u_\mathrm{ss}$, where by hypothesis $u_\mathrm{ss}^j>0$. Then using~\eqref{LinearGeometry.EnergyFunction}, $f_k\left(x,u_\mathrm{ss}\right)$ can be  written as
\begin{align}\label{LinearGeometry.ExplicitExpressionF}
f_k\left(x,u_\mathrm{ss}\right)&=-\frac{{\partial}}{{\partial}x_k}\,V\left(x,u_\mathrm{ss}\right),\nonumber\\
&=-\frac{{\partial}}{{\partial}x_k}\sum_{\substack{j=1\\j\neq{k}}}^{{n}}\frac{1}{\left|x_k-x_j\right|}
-\frac{{\partial}}{{\partial}x_k}\sum_{j=0}^{c}\frac{u_\mathrm{ss}^j}{\left|x_k-q_j\right|}\,,\nonumber\\
&=\sum_{\substack{j=1\\j\neq{k}}}^{{n}}\frac{\mathrm{sign}\left(x_k-x_j\right)}{\left(x_k-x_j\right)^2}+\sum_{j=0}^c
\frac{u_\mathrm{ss}^j\mathrm{sign}\left(x_k-q_j\right)}{\left(x_k-q_j\right)^2}\,,
\end{align}
where $\mathrm{sign}\left(\cdot\right)$ denotes the signum function. Further, for $x\in\mathscr{S}\left(\nu\right)$, the partial derivatives of $f_k\left(\,\cdot\,,u_\mathrm{ss}\right)$ with respect to $x_k$ and $x_i$, $i\neq{k}$ exist and are given by
\begin{subequations}\label{LinearGeometry.PartialDerivativeF}
\begin{align}
\frac{{\partial}f_k}{{\partial}x_k}\left(x,u_\mathrm{ss}\right)&=-\sum_{\substack{j=1\\j\neq{k}}}^{{n}}\frac{2}{\left|x_k-x_j\right|^3}
-\sum_{j=0}^{c}\frac{2u_\mathrm{ss}^j}{\left|x_k-q_j\right|^3}\,,\label{LinearGeometry.PartialDerivativeFwrtXk}\\
\frac{{\partial}f_k}{{\partial}x_i}\left(x,u_\mathrm{ss}\right)&=\frac{2}{\left|x_k-x_i\right|^3}\,,\quad{i\neq{k}}.
\label{LinearGeometry.PartialDerivativeFwrtXi}
\end{align}
\end{subequations}

Given $x\in\mathscr{S}\left(\nu\right)$, for each $k=1,2,\ldots,n$, let $k^\prime$ denote the smallest integer satisfying $x_k<q_{k^\prime}$ and define
\begin{align*}
x_1^L&=q_0,\\
x_k^L&=\max\left(x_{k-1},q_{k^\prime-1}\right),\quad{k=2,3,\ldots,n},\\
x_k^U&=\min\left(x_{k+1},q_{k^\prime}\right),\quad{k=1,2,\ldots,n-1},\\
x_n^U&=q_c.
\end{align*}
Fix all components of $x\in\mathscr{S}\left(\nu\right)$ except for $x_k$ which is allowed to vary in the segment $\left(x_k^L,x_k^U\right)$. The negative derivative~\eqref{LinearGeometry.PartialDerivativeFwrtXk} implies~that $f_k\left(\,\cdot\,,u_\mathrm{ss}\right)$ is strictly decreasing in $x_k\in\left(x_k^L,x_k^U\right)$ with all other variables fixed. In addition,~\eqref{LinearGeometry.ExplicitExpressionF} implies that this function tends to $+\infty$ and $-\infty$ as $x_k$ tends to $x_k^L$ and $x_k^U$, respectively. Thus, it is concluded that $f_k\left(x,u_\mathrm{ss}\right)=0$ has one and only one solution for $x_k$ in the interval $x_k\in\left(x_k^L,x_k^U\right)$, with all other variables fixed. This solution depends on $x_1,\ldots,x_{k-1},x_{k+1},\ldots,x_n$ and is denoted by
\begin{equation*}
g_k\left(x\right)=g_k\left(x_1,\ldots,x_{k-1},x_{k+1},\ldots,x_n\right).
\end{equation*}

Let $g:\mathbb{R}^n\to\mathbb{R}^n$ be a vector-valued function with the components $g_k$, $k=1,2,\ldots,n$. Then, $x_\mathrm{ss}$ is a solution to~\eqref{LinearGeometry.AlgebraicEquation}, if and only if it is a fixed point of the mapping $g$, i.e., if it solves
\begin{equation*}
x=g\left(x\right).
\end{equation*}
It is proven next that~\eqref{LinearGeometry.AlgebraicEquation} has exactly one solution in $\mathscr{S}\left(\nu\right)$ by showing that $g$ is a contraction map on $\mathscr{S}\left(\nu\right)$, and thereby it has exactly one fixed point in this set.

The gradient ${\nabla_x}g_k$ of the scalar function $g_k$ is obtained as follows. Based on the definition of $g_k$, the identity
\begin{equation*}
f_k\left(x,u_\mathrm{ss}\right)\big{|}_{x_k=g_k\left(x_1,\ldots,x_{k-1},x_{k+1},\ldots,x_n\right)}=0
\end{equation*}
holds true for any $\left(x_1,\ldots,x_{k-1},x_{k+1},\ldots,x_n\right)\in\mathscr{S}\left(\nu^{\,\prime}\right)$, where the components of $\nu^{\,\prime}$ are similar to $\nu$ except for $\nu_k^{\,\prime}$ which is equal to $\nu_k-1$. Differentiating this identity with respect to $x_i\neq{x_k}$ leads to
\begin{equation*}
\frac{{\partial}f_k}{{\partial}x_i}\left(x,u_\mathrm{ss}\right)+\frac{{\partial}f_k}
{{\partial}x_k}\left(x,u_\mathrm{ss}\right)\,\frac{{\partial}g_k}{{\partial}x_i}\left(x\right)=0.
\end{equation*}
Solving this equation for ${\partial}g_k/{\partial}x_i$ results in
\begin{equation}\label{LinearGeometry.PartialDerivativeG}
\frac{{\partial}g_k}{{\partial}x_i}\left(x\right)=-\left(\frac{{\partial}f_k}{{\partial}x_k}\left(x,u_\mathrm{ss}\right)\right)^{-1}
\frac{{\partial}f_k}{{\partial}x_i}\left(x,u_\mathrm{ss}\right).
\end{equation}
Since $g_k$ does not depend on $x_k$, its partial derivative with respect to $x_k$ is identically $0$ so that
\begin{equation*}
\frac{{\partial}g_k}{{\partial}x_k}\left(x\right)=0.
\end{equation*}
Substituting the explicit expressions~\eqref{LinearGeometry.PartialDerivativeF} into~\eqref{LinearGeometry.PartialDerivativeG} and noting that $u_\mathrm{ss}^j>0$, it is straightforward to verify that
\begin{equation*}
\left\|{\nabla_x}g_k\left(x\right)\right\|_1=\sum_{i=1}^n\left|\frac{{\partial}g_k}{{\partial}x_i}\left(x\right)\right|<1,{\quad}k=1,2,\ldots,n.
\end{equation*}

Let $x,y\in\mathscr{S}\left(\nu\right)$ and consider the line segment
\begin{equation*}
\ell\left(s\right)=sx+\left(1-s\right)y,{\quad}s\in\left[0,1\right].
\end{equation*}
Since $\mathscr{S}\left(\nu\right)$ is a convex set, all points on this line segment are inside the set. Applying the mean value theorem~\cite{BOOK.Apostol.67} to the scalar function $g_k\circ\ell$ implies that there exists $s_k^*\in\left(0,1\right)$ such that
\begin{align*}
g_k\circ\ell\left(1\right)-g_k\circ\ell\left(0\right)&=\frac{dg_k\circ\ell}{ds}\left(s_k^*\right)\\
&=\left({\nabla_x}g_k\left(\ell\left(s_k^*\right)\right)\right)^\mathrm{T}\ell^\prime\left(s_k^*\right).
\end{align*}
Substituting $g_k\circ\ell\left(1\right)=g_k\left(x\right)$, $g_k\circ\ell\left(0\right)=g_k\left(y\right)$, and $\ell^\prime\left(s_k^*\right)=x-y$ into this equality and taking absolute values of its sides lead to
\begin{align*}
\left|g_k\left(x\right)-g_k\left(y\right)\right|&=\left|\left({\nabla_x}
g_k\left(\ell\left(s_k^*\right)\right)\right)^\mathrm{T}\left(x-y\right)\right|\\
&\leqslant\left\|{\nabla_x}g_k\left(\ell\left(s_k^*\right)\right)\right\|_1\left\|x-y\right\|_\infty.
\end{align*}
Because this inequality holds for every $k=1,2,\ldots,n$, it is concluded that
\begin{equation*}
\left\|g\left(x\right)-g\left(y\right)\right\|_\infty\leqslant{K}\left\|x-y\right\|_\infty,
\end{equation*}
where
\begin{equation*}
K=\max_{k=1,2,\ldots,n}\left\|{\nabla_x}g_k\left(\ell\left(s_k^*\right)\right)\right\|_1<1.
\end{equation*}
This verifies that $g$ is a contraction map.

Finally, it is shown that $V\left(\,\cdot\,,u_\mathrm{ss}\right)$ is strictly convex on $\mathscr{S}\left(\nu\right)$, and thereby the solution $x_\mathrm{ss}$ to~\eqref{LinearGeometry.AlgebraicEquation} is a stable equilibrium. The Ger{\v{s}}gorin circle theorem~\cite{BOOK.Varga.04} applied to the Jacobian matrix~of the vector field $f\left(\,\cdot\,,u_\mathrm{ss}\right)$ implies that the eigenvalues $\varrho $ of this matrix are inside the circles of the form
\begin{equation*}
\left|\,\varrho-\frac{{\partial}f_k}{{\partial}x_k}\left(x,u_\mathrm{ss}\right)\right|\leqslant\sum_{\substack{i=1\\i\neq{k}}}^{{n}}
\left|\frac{{\partial}f_k}{{\partial}x_i}\left(x,u_\mathrm{ss}\right)\right|.
\end{equation*}
Since ${\partial}f_k/{\partial}x_k<0$ according to~\eqref{LinearGeometry.PartialDerivativeF} and the right-hand side of the inequality is smaller than $\left|{\partial}f_k/{\partial}x_k\right|$, the eigenvalues of the Jacobian matrix have negative values.
%
%

%
%
\end{document}